\newtheorem{theorem}{Theorem}[section]
\newtheorem{proposition}[theorem]{Proposition}
\newtheorem{remark}[theorem]{Remark}
\theoremstyle{definition}
\newtheorem{definition}[theorem]{Definition}
\newtheorem{example}[theorem]{Example}
\newtheorem{corollary}[theorem]{Corollary}
\newcommand{\len}[3]{\ell_{#1}\left(#2;#3\right)}
\newcommand{\ileft}[3]{\texttt{left}_{#1}\left(#2;#3\right)}
\newcommand{\iright}[3]{\texttt{right}_{#1}\left(#2;#3\right)}
\title{Counting $k$-Naples parking functions through\\ permutations and the $k$-Naples area statistic}
\author{Laura Colmenarejo}
\address[L.\ Colmenarejo]{Department of Mathematics and Statistics, Umass Amherst, United States}
\email{\textcolor{blue}{\href{mailto:laura.colmenarejo.hernando@gmail.com}{laura.colmenarejo.hernando@gmail.com}}}
\author{Pamela E.\ Harris}
\address[P.\ E.\ Harris]{Department of Mathematics and Statistics, Williams College, United States}
\email{\textcolor{blue}{\href{mailto:peh2@williams.edu}{peh2@williams.edu}}}
\author{Zakiya Jones}
\address[Z.\ Jones]{Department of Mathematics, Pomona College, United States}
\email{\textcolor{blue}{\href{mailto:zakiyacmjones@gmail.com}{zakiyacmjones@gmail.com}}}
\author{Christo Keller}
\address[C.\ Keller]{Department of Mathematics and Statistics, UMass Amherst, United States}
\email{\textcolor{blue}{\href{mailto:thechristokellera@gmail.com}{thechristokellera@gmail.com}}}
\author{Andr\'es Ramos Rodr\'{i}guez}
\address[A.\ Ramos Rodr\'{i}guez]{Department of Mathematics, Universidad de Puerto Rico, R\'{i}o Piedras, United States}
\email{\textcolor{blue}{\href{mailto:ramosandres443@gmail.com}{ramosandres443@gmail.com}}}
\author{Eunice Sukarto}
\address[E.\ Sukarto]{Department of Mathematics, University of California, Berkeley, United States}
\email{\textcolor{blue}{\href{mailto:eunicesukarto@berkeley.edu}{eunicesukarto@berkeley.edu}}}
\author{Andr\'es R.\ Vindas-Mel\'endez}
\address[A.\ R.\ Vindas-Mel\'endez]{Department of Mathematics, University of Kentucky, United States}
\email{\textcolor{blue}{\href{mailto:andres.vindas@uky.edu}{andres.vindas@uky.edu}}}
\begin{document}

\begin{abstract}
We recall that the $k$-Naples parking functions of length $n$ (a generalization of parking functions) are defined by requiring that a car which finds its preferred spot occupied must first back up a spot at a time (up to $k$ spots) before proceeding forward down the street. Note that the parking functions are the specialization of $k$ to $0$.
For a fixed $0\leq k\leq n-1$, we define a function $\varphi_k$
which maps a $k$-Naples parking function to the permutation denoting the order in which its cars park.
By enumerating the sizes of the fibers of the map $\varphi_k$
we give a new formula for the number of $k$-Naples parking functions as a sum over the permutations of length $n$.

We remark that our formula for enumerating $k$-Naples parking functions is not recursive, in contrast to the previously known formula of Christensen et al~\cite{NaplesPF}.
It can be expressed as the product of the lengths of particular subsequences of permutations, and its 
specialization to $k=0$ gives a new way to describe the number of parking functions of length $n$.
We give a formula for the sizes of the fibers of the map $\varphi_0$, and we provide a recurrence relation for its corresponding logarithmic generating function.
Furthermore, we relate the $q$-analog of our formula 
to a new statistic that we denote $\texttt{area}_k$ and call the
\textit{$k$-Naples area statistic}, the specialization of which to $k=0$ gives the \texttt{area} statistic on parking functions.
\end{abstract}

\maketitle

\section{Introduction}
Parking functions are combinatorial objects introduced in~\cite{KW} to study hashing problems. 
To define these objects, consider a one-way street with $n$ parking spots labeled $1$ though $n$ and a line of $n$ cars, $c_1, c_2, \dots, c_n$ waiting to park. 
A parking preference is a tuple $\alpha = (a_1, \dots, a_n)\in[n]^n$, where $a_i$ is the preferred parking spot of car $c_i$. 
The rule for parking cars is that car $c_i$ goes to its preferred spot $a_i$ and if it is empty, parks there. 
If the space is occupied, $c_i$ moves forward until it finds the next available space. 
We say that a parking preference $\alpha$ is a parking function if all the cars can park within the given $n$ spots without any cars driving off the road.

Parking functions have received much attention in the combinatorics literature~\cite{bonaM}.
The study of parking functions has led to connections with fields such as graph theory, representation theory, hyperplane arrangements, and discrete geometry~\cite{graphs,dh,hyperplanes, PitmanStanley}.
Moreover, in the last few years many generalizations of parking functions have appeared in the literature \cite{Adeniran1+,Adeniran4+,Adeniran2+,Adeniran3+}.  For a survey of generalizations and open problems related to parking functions, see in~\cite{PFCYOA}. 
Among those generalizations, we find the \textbf{$k$-Naples parking functions}, introduced by Baumgardner for $k=1$ and by Christensen et.\ al.\  for general $k$, see~\cite{BG,NaplesPF}. This generalization modifies the parking rule so that car $c_i$, upon finding its preferred space $a_i$ occupied, first backs up and checks if the spot $a_i - 1$ is occupied. If it is empty it parks there, otherwise it backs up, a spot at a time, at most $k$ spaces attempting to park in the first available before going forward once again. The set of $k$-Naples parking functions is the set of parking functions that can park under the $k$-Naples rule. In~\cite{NaplesPF}, the authors study this generalization and provide a recursive formula for the number of $k$-Naples parking functions and provide some connections to signature Dyck paths.

This paper continues the study of the $k$-Naples parking functions by extending it to a new $q$-analog. To make concrete our results we begin by describing our process.
Let $S_n$ denote the symmetric group on $n$ letters, $PF_{n,k}$ denote the set of $k$-Naples parking functions of length $n$, and $PF_{n,0}:=PF_n$ denote the set of parking functions of length $n$. 
For a fixed $0\leq k\leq n-1$, we define a map $\varphi_k$ which maps a $k$-Naples parking function to the permutation denoting the order in which the cars park (see Definition~\ref{def:varphi_k}). 
Then we count the size of the fibers of this map under each permutation in $S_n$ (see Theorem~\ref{thm:permutations-k}), thereby giving a formula for the number of $k$-Naples parking functions as a sum over $S_n$ (see Theorem~\ref{thm:pfnk}). 
This formula enumerating $k$-Naples parking functions is new and not recursive, in contrast to the one in~\cite{NaplesPF}. Moreover, our formula is given as the product of the lengths of particular subsequences of permutations. 
In the parking function case, that is when $k=0$, we also give a recurrence relation for the fibers of the map $\varphi_0$, providing a recursive formula to compute the coefficients of the corresponding generating function.

From the above mentioned results, we continue our work by studying a $q$-statistic of $k$-Naples parking functions of length $n$. 
Given a non-negative integer $n$, we denote its $q$-analog 
$$ [n]_q =\lim_{q\rightarrow 1} \dfrac{1-q^n}{1-q} =  1+q+q^2+\cdots +q^{n-1}.$$ 
With this $q$-analog, one may define $q$-analogs of almost any formula. 
For instance, the $q$-factorial or the $q$-binomial coefficients.
In general, we denote by $[\star]_q$ the $q$-analog of the formula $\star$ by substituting any number appearing in $\star$ by its $q$-analog.
It turns out that the $q$-analog of our formula for counting parking functions coincides with the \textit{$q$-fermionic formula} presented in~\cite{dh} (taking $t=0$). That is, we describe the relation between the distribution of the \texttt{area} statistic and the $q$-analog of the lengths involved in our formula (see Proposition~\ref{prop:classical-q}). We also give the definition of the \textbf{$k$-Naples area statistic}, as well as the analogous result for our formula for $k$-Naples parking functions.

The paper is organized as follows. In Section~\ref{section:background}, we include main definitions and results related to parking functions and $k$-Naples parking functions. In Section~\ref{section:permutations}, we present our main result counting $k$-Naples parking functions through permutations, and we include the result for parking functions to illustrate the intuitive idea of the proof. 
As a continuation, in Section~\ref{section:GFvarphi} we give a recursive formula to enumerate permutations whose preimage under $\varphi_0$ has a given cardinality and from this we present the associated generating function.
Finally, in Section~\ref{section:qanalogs}, we define the $\texttt{area}_k$ statistic for $k$-Naples parking functions and we relate our enumerative results to the $\texttt{area}$ statistic.

\section{Background}\label{section:background}
Let us start by defining the main objects of our study, as well as the notation we use. 
\begin{definition}
Let $\alpha= (a_1,\dots,a_n)$, with $a_i\in[n]:=\{1,\dots,n\}$. 
We say that $\alpha$ is a \textbf{parking preference sequence} meaning that the $i^{\text{th}}$ car, henceforth denoted $c_i$, wants to park in spot $a_i$. 
Then, $\alpha$ is a \textbf{parking function of length $n$} if all the cars park under the following \textbf{parking rule}:
\begin{quote}
Imagine $n$ cars travel down a one-way street with $n$ parking spots.  Each car prefers a spot, which it attempts to park in.  If the spot is empty, it parks there and succeeds; otherwise, it continues down the road until it finds an empty spot.
\end{quote}\end{definition}

We denote by $PP_n$ the set of parking preferences of length $n$, and by $PF_n$ the set of parking functions of length $n$. To illustrate the above definition we present the following example. 

\begin{example}
Consider the parking preference $(2,1,1)$.
We have three open spots and $c_1$ takes the $2^{\text{nd}}$ spot.  
In comes $c_2$ and parks in spot $1$.  
Now, $c_3$ drives up to the $1^\text{st}$ spot and sees that the parking spot is occupied. 
Following the parking rule, it proceeds to the $2^\text{nd}$ spot, which is also taken. 
Finally, it tries the $3^\text{rd}$ spot, where it parks successfully. 
All three cars park, so $(2,1,1)\in PF_{3}$.
\end{example}

The $k$-Naples parking functions generalize these objects by modifying the parking rule. 

\begin{definition}
We say that a parking preference $\alpha$ is a \textbf{$k$-Naples parking function}, with $0\leq k \leq n$, if all the cars are able to park under the following \textbf{$k$-Naples parking rule}:
\begin{quote}
Imagine $n$ cars travel down a one-way street with $n$ parking spots. Each car prefers a spot, which it attempts to park in.  
If the spot is empty, it backs up checking up to $k$ spots behind its preferred spot and parks in the first available. 
If all the $k$ spots preceding its preferred spot are occupied, then the car continues down the street until it finds an empty spot in which to park.
\end{quote}
We denote by $PF_{n,k}$ the set of $k$-Naples parking functions of length $n$.
Moreover, we denote by $PF_{n,k}^d$ the subset of $PF_{n,k}$ given by the decreasing $k$-Naples parking functions of length $n$, i.e., the set of all $\alpha = (a_1,\ldots, a_n) \in PF_{n,k}$ such that $a_1\geq a_2\geq \cdots \geq a_n$. 
\end{definition}

Note that the case when $k=0$ is precisely the case where cars do not back up, which is exactly the definition of a parking function. Hence  $PF_{n,0}=PF_n$. 
Also, if the cars are able to back up and check up to $n-1$ spots behind their preferred spot, this would allow cars to check the entire length of the street in search for an empty parking spot. Hence, every car can park regardless of their parking preference, which implies that $PF_{n,n-1}=PP_n$. Moreover, whenever $1 \leq k\leq n$ it follows that $PF_{n,k-1}\subseteq PF_{n,k}$. But not every $k$-Naples parking functions is a parking function.

\begin{example}
Take the parking preference $(3,2,2)$.
Under the parking rule: the first two cars park, but $c_3$ drives off the road. 
Following the $1$-Naples parking rule, the first two cars park (as before) and $c_3$ finding the $2^\text{nd}$ parking spot occupied, it subsequently checks the $(2-1)^\text{th}=1^{\text{st}}$ spot, where it is able to park.  
Therefore $(3,2,2)\in PF_{3,1}$ even though $(3,2,2)\not\in PF_{3,0}=PF_3$.
\end{example}

Parking functions are in bijection with many other combinatorial objects. 
For instance, rooted forests~\cite{rforests}, maximal chains of non-crossing partitions~\cite{ncp}, and regions in the Shi arrangement~\cite{hyperplanes}.
For our purposes, we present their bijection to labeled Dyck paths.

\begin{definition}
A \textbf{Dyck path of length $2n$} is a lattice path from $(0, n)$ to $(n, 0)$ consisting of $n$ steps east by $(1,0)$ and $n$ steps south by $(0,-1)$ all of which have the path staying above the diagonal $y =n- x$. 
A \textbf{labeled Dyck path of length $2n$} is a Dyck path such that the south steps are labeled with numbers $1,\dots, n$ and consecutive south steps have increasing labels. 
\end{definition}

\begin{theorem}[Stanley] \label{thm:dyckPFbijection}
The set of parking functions of length $n$ are in bijection with the set of labeled Dyck paths of length $2n$. 
\end{theorem}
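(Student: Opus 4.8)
The plan is to construct an explicit bijection between $PF_n$ and the set of labeled Dyck paths of length $2n$, and then verify it is well-defined in both directions. Given a parking function $\alpha = (a_1,\dots,a_n) \in PF_n$, I first run the parking process to determine, for each spot $j \in [n]$, which car $c_{\pi(j)}$ ends up parked there; this produces the underlying permutation $\pi$ recording the parking outcome. The Dyck path is then built column by column from west to east: in the $j$-th column, I place a number of south steps equal to $|\{i : a_i = j\}|$, i.e.\ the number of cars whose \emph{preferred} spot is $j$, and I label these south steps, in increasing order from top to bottom, by the indices of those cars sorted increasingly. The east steps are inserted so that after processing column $j$ the path has taken exactly $j$ east steps and $\sum_{\ell \le j}|\{i : a_i = \ell\}|$ south steps. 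The consecutive-south-step labels are increasing by construction, so the only thing to check on this side is that the resulting path stays weakly above the diagonal $y = n-x$.

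The key lemma is the standard characterization of parking functions: $\alpha \in PF_n$ if and only if, for every $j \in [n]$, the number of cars preferring a spot in $\{1,\dots,j\}$ is at least $j$; equivalently, the nondecreasing rearrangement $b_1 \le \cdots \le b_n$ of $\alpha$ satisfies $b_i \le i$ for all $i$. I would prove this by a short argument: if some prefix condition fails, fewer than $j$ cars want spots $\le j$ yet all cars preferring those spots must park in $\{1,\dots,n\}\setminus$ (nothing forces them past $j$ only if\dots) — more carefully, if $|\{i : a_i \le j\}| < j$ then spots $1,\dots,j$ cannot all be filled only by... actually the clean direction is: spot $1,\dots,j$ can only be filled by cars preferring a spot $\le j$, so if there are $n$ cars total and spots $j+1,\dots,n$ must absorb all $n - |\{i: a_i\le j\}| > n-j$ remaining cars into $n-j$ spots, a pigeonhole contradiction; conversely the prefix condition lets a greedy argument place everyone. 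Translating this prefix inequality into the path language: after $j$ east steps the path has made at most... precisely, the south-step count through column $j$ is $|\{i : a_i \le j\}| \ge j$, which is exactly the statement that the path is weakly above $y = n-x$ at every lattice point. So $\varphi$ lands in labeled Dyck paths.

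For the inverse, given a labeled Dyck path I read off $a_i = $ the column index in which the south step labeled $i$ occurs; the Dyck (above-diagonal) condition gives back the prefix inequality, so the resulting preference sequence is a parking function, and the two maps are visibly mutually inverse since each recovers the column-multiset-with-labels data of the other. The main obstacle — really the only substantive point — is proving the prefix-inequality characterization of parking functions cleanly and checking that the increasing-labels-within-a-column convention introduces no ambiguity and is forced (two parking functions differing only by a permutation of cars with equal preferences are equal as sequences only if the permutation is trivial, so distinct parking functions give distinct labeled column data); everything else is bookkeeping. I would also remark that under this bijection the "cars that prefer spot $j$" data is exactly the multiset of south-step positions, which is the viewpoint that will later interact with the map $\varphi_k$ and the $\texttt{area}$ statistic.
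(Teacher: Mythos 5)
Your bijection is exactly the one the paper sketches immediately after the theorem statement (south steps in column $j$ counting the cars preferring spot $j$, labeled by those cars' indices in increasing order); the paper cites Stanley and gives only this construction, whereas you additionally supply the verification via the prefix-inequality characterization $|\{i : a_i \le j\}| \ge j$, whose necessity you prove cleanly by pigeonhole. The only soft spot is that the sufficiency direction (prefix condition implies all cars park), which you need so that the inverse map lands in $PF_n$, is asserted rather than proved, but this is the standard characterization and your overall argument is correct and matches the paper's approach.
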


In the proof of Theorem~\ref{thm:dyckPFbijection}, given a parking function $\alpha = (a_1,a_2,\dots, a_n)\in PF_n$, let $u_i$ denote the number of occurrences of $i$ in $\alpha$.
Consider an $n\times n$ grid, and label the vertical lines from left to right with $1,\dots, n$.
Notice the last vertical line on the right is not labeled. 
Then, we define the Dyck path $P$ by drawing $u_i$ south steps in the $i^{\text{th}}$ column and we label each south step with the position of the value $i$ in $\alpha$, making sure that the labels are increasing for consecutive south steps. 

Next, we illustrate the bijection of Theorem~\ref{thm:dyckPFbijection} with the following example. 
\begin{example}
Consider the parking function $\alpha= (3,3,1,4,2,2)$. Then, $u_1$ with label $\{3\}$, $u_2=2$ with labels $\{5,6\}$, $u_3=2$ with labels $\{1,2\}$, $u_4=1$ with label $\{4\}$, and $u_5=u_6=0$. In Figure~\ref{fig:dyckpath}, we draw the labeled Dyck path corresponding to $\alpha$.

\begin{figure}[h]
    \centering
    \begin{tikzpicture}[scale=0.8]
(0,0) rectangle +(6,6);
\draw[help lines] (0,0) grid +(6,6);
\draw[dashed] (0,6) -- (6,0);
\coordinate (prev) at (0,0);
\draw [color=black, line width=2] (0,6)--(0,5)--(1,5)--(1,3)--(2,3)--(2,1)--(3,1)--(3,0)--(6,0);
\draw (-0.25,5.5) node {3};
\draw (0.75,4.5) node {5};
\draw (0.75,3.5) node {6};
\draw (1.75,2.5) node {1};
\draw (1.75,1.5) node {2};
\draw (2.75,0.5) node {4};
 \end{tikzpicture}
    \caption{Labeled Dyck path corresponding to the parking function $\alpha=(3,3,1,4,2,2)$.}
    \label{fig:dyckpath}
\end{figure}
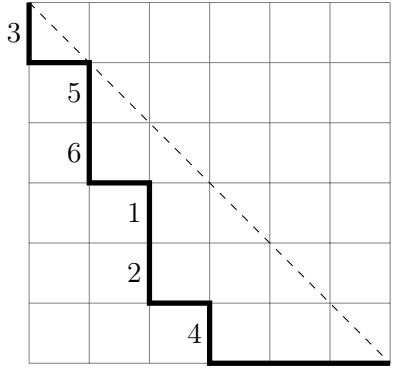
\end{example}

There is no known generalization of this result to all the $k$-Naples parking functions. 
However, using what we call $k$-lattice paths of length $2n$, we have the following partial characterization for just the weakly decreasing $k$-Naples parking functions of length $2n$.

\begin{definition}
Given $n,k\in \mathbb{N}$ with $0 \leq   k \leq  n - 1$, a \textbf{$k$-lattice path of length $2n$} is a lattice path from $(0, n)$ to $(n, 0)$ consisting of $n$ steps east by $(1, 0)$ and $n$ steps south by $(0, -1)$, such that the path never goes above the line $y = n - x + k$.
\end{definition}

\begin{theorem}[Theorem 1.3,~\cite{NaplesPF}]\label{thm:bijectionNPF}
Given $n,k\in \mathbb{N}$ with $k \leq  n - 1$, the set of decreasing $k$-Naples parking functions of length $n$ is in bijection with the $k$-lattice paths of length $2n$ where the first step is south.
\end{theorem}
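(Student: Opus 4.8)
The plan is to reduce the bijection to two self-contained facts: an explicit inequality characterizing the weakly decreasing $k$-Naples parking functions, and a description of the $k$-lattice paths whose first step is south in terms of the horizontal positions of their south steps.

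\emph{Step 1 (characterization of $PF_{n,k}^d$).} I would first prove that a weakly decreasing $\alpha=(a_1\ge a_2\ge\cdots\ge a_n)\in[n]^n$ lies in $PF_{n,k}^d$ if and only if
\[
\#\{\, i\in[n] : a_i> j \,\}\ \le\ n-j+k\qquad\text{for all } 0\le j\le n,
\]
equivalently, the nondecreasing rearrangement $b_1\le\cdots\le b_n$ of $\alpha$ satisfies $b_i\le i+k$ for all $i$ (the two forms are equivalent by the usual translation between order statistics and counting functions, and both reduce to the classical Pollak condition when $k=0$). Necessity is a pigeonhole argument: a car with preference $a_i>j$ reverses at most $k$ spots and so parks at a spot $\ge a_i-k$, in particular at a spot exceeding $j-k$; hence the $\#\{i:a_i>j\}$ such cars occupy distinct spots exceeding $j-k$, of which there are at most $n-j+k$. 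For sufficiency I would argue by contradiction. Suppose the inequalities hold but some car drives off, let $c_t$ be the first such car, and put $L:=\max(1,a_t-k)$. When $c_t$ is processed it first backs up through the occupied spots $a_t-1,\dots,L$ and then travels forward through the occupied spots $a_t+1,\dots,n$, so at that instant every spot of $\{L,\dots,n\}$ is occupied --- necessarily by the $t-1$ cars $c_1,\dots,c_{t-1}$, whence $t-1\ge n-L+1$. On the other hand, since $\alpha$ is weakly decreasing we have $a_1\ge\cdots\ge a_t$, so at least $t$ of the entries exceed $a_t-1$; applying the displayed inequality at $j=a_t-1$ gives $t\le n-a_t+1+k$. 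If $a_t>k$ then $L=a_t-k$, so this reads $t\le n-L+1$, contradicting $t\ge n-L+2$; and if $a_t\le k$ then $L=1$, so $t\ge n+1$, which is impossible. Hence every car parks.

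\emph{Step 2 (the bijection).} A $k$-lattice path of length $2n$ is determined by the integers $x_n\le x_{n-1}\le\cdots\le x_1$ in $\{0,1,\dots,n\}$, where $x_j$ is the $x$-coordinate of the south step that descends from height $j$ to height $j-1$; the path stays weakly below $y=n-x+k$ exactly when $x_j\le n-j+k$ for each $j$, and its first step is south exactly when $x_n=0$. I define $\Phi\colon PF_{n,k}^d\to\{\,k\text{-lattice paths of length }2n\text{ with first step south}\,\}$ by letting $\Phi(\alpha)$ be the path with $x_j:=\#\{i:a_i>j\}$. Then $x_n=\#\{i:a_i>n\}=0$; the successive differences $x_{j-1}-x_j=\#\{i:a_i=j\}$ (with the convention $x_0:=n$) are nonnegative and recover the multiplicities of the values of $\alpha$; and the requirement $x_j\le n-j+k$ is precisely the characterization obtained in Step 1. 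Conversely, from any such path one reads off the multiplicities $x_{j-1}-x_j$ and reconstitutes the unique weakly decreasing tuple $\alpha$ realizing them, which lies in $PF_{n,k}^d$ again by Step 1. These assignments are mutually inverse, so $\Phi$ is the desired bijection. (For $k=0$ the clause ``first step south'' is automatic, so one also recovers the Catalan enumeration of weakly decreasing parking functions.)

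The one genuinely delicate point is the sufficiency direction of Step 1. Because the $k$-Naples rule is order-dependent --- distinct rearrangements of the same multiset can produce different parking outcomes --- a capacity count alone does not imply parkability in general, and what rescues the argument is exactly the weakly decreasing hypothesis, which forces the $t$ cars $c_1,\dots,c_t$ up to and including the first failure to all have preference at least $a_t$. The remaining steps --- the equivalence of the two forms of the characterization, and the well-definedness and invertibility of $\Phi$ --- are routine bookkeeping.
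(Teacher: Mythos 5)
Your proof is correct, but it is worth pointing out that the paper itself does not prove this statement: Theorem~\ref{thm:bijectionNPF} is quoted from~\cite{NaplesPF}, and the surrounding discussion only describes the underlying map (send $\alpha$ to the lattice path whose $i^{\text{th}}$ east step lies at height $a_i-1$) and then records the inequality characterization $a_i\le\min(n,n-i+1+k)$ as Theorem~\ref{theorem:version2}, presented as a \emph{consequence} of the bijection. Your argument runs in the opposite logical direction: you first prove that characterization directly from the $k$-Naples parking dynamics (in the equivalent forms $\#\{i:a_i>j\}\le n-j+k$ and $b_i\le i+k$ for the increasing rearrangement), and only then observe that the bijection with $k$-lattice paths is bookkeeping, since your south-step parametrization $x_j=\#\{i:a_i>j\}$ is the transpose of the paper's east-step description and converts the constraint ``never above $y=n-x+k$'' into exactly $x_j\le n-j+k$, with ``first step south'' corresponding to $x_n=0$, i.e.\ $a_i\le n$. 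Both halves of your Step 1 check out: the pigeonhole necessity argument (cars with preference exceeding $j$ park in the $n-j+k$ spots exceeding $j-k$) does not need monotonicity, while the first-failure contradiction for sufficiency correctly combines $t-1\ge n-L+1$ with $t\le n-a_t+1+k$, and you rightly identify the weakly decreasing hypothesis as the point that saves the argument, since the $k$-Naples outcome is order-dependent and the capacity inequalities alone would not guarantee parkability for an arbitrary rearrangement. The payoff of your route is a self-contained proof that simultaneously establishes Theorem~\ref{theorem:version2} rather than importing it.
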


\begin{remark}
The definition of $k$-lattice paths does not require that the \emph{first step is south}. 
However, it is a necessary condition for the bijection, as the authors of~\cite{NaplesPF} point out in the proof.
\end{remark}

In the most general case, parking preferences are in bijection with lattice paths.
This bijection sends a parking preference, $\alpha =  (a_1,\ldots, a_n) \in PP_n $, to the lattice path with east steps $(i-1, a_i-1)$ to $(i,a_i-1)$, for $i=1,\dots, n$. 
In~\cite[Theorem 1.3]{NaplesPF}, the authors show that this bijection restricts nicely to $k$-Naples parking functions establishing Theorem~\ref{thm:bijectionNPF}. We consider the following example to illustrate this bijection.

\begin{example}
In Figure~\ref{fig:pathexample}, we draw the $2$-lattice path corresponding to the $2$-Naples parking function $\alpha = (6,6,4,4,2,2)$.
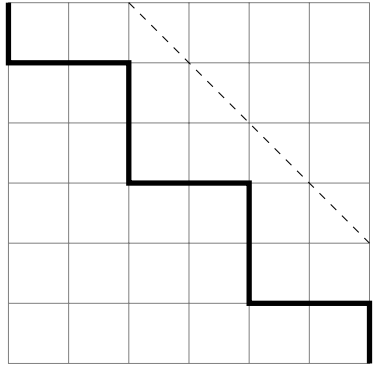
\begin{figure}[ht]
    \centering
    \begin{tikzpicture}[scale=0.8]
(0,0) rectangle +(6,6);
\draw[help lines] (0,0) grid +(6,6);
\draw[dashed] (2,6) -- (6,2);
\coordinate (prev) at (0,0);
\draw [color=black, line width=2] (0,6)--(0,5)--(2,5)--(2,3)--(4,3)--(4,1)--(6,1)--(6,0);
 \end{tikzpicture}
    \caption{2-lattice path corresponding to $\alpha=(6,6,4,4,2,2)$.}
    \label{fig:pathexample}
\end{figure}
\end{example}

Note that $a_i$ corresponds to the $i^{\text{th}}$ east step. Moreover, the fact that the first step is south implies that for all $i$, $a_i \leq \min(n, n - i + k + 1)$. This gives us the following variant of Theorem~\ref{thm:bijectionNPF}:
\begin{theorem}\label{theorem:version2}
Let $\alpha = (a_1, \dots, a_n)\in PP_n$ be a decreasing parking preference. Then, $\alpha \in PF_{n, k}^d$ if and only if $a_i \leq \min(n, n -i + 1 + k)$ for all $1\leq i\leq n$. 
\end{theorem}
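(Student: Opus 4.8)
The plan is to read the claimed inequalities directly off the lattice‑path model of Theorem~\ref{thm:bijectionNPF}. Since $\alpha$ is decreasing and lies in $PP_n$, the heights $a_1-1\geq a_2-1\geq\cdots\geq a_n-1$ are weakly decreasing, so the path $P_\alpha$ associated to $\alpha$ — whose $i$-th east step runs from $(i-1,a_i-1)$ to $(i,a_i-1)$ — really is a lattice path from $(0,n)$ to $(n,0)$ built only from east and south steps: from $(0,n)$ it takes $n-a_1+1\geq 1$ south steps down to $(0,a_1-1)$, then for each $i$ it performs the $i$-th east step followed by $a_i-a_{i+1}\geq 0$ south steps, and it finishes with $a_n-1\geq 0$ south steps from $(n,a_n-1)$ to $(n,0)$. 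In particular the first step of $P_\alpha$ is south, because $a_1\leq n$ forces $a_1-1<n$. Hence, by Theorem~\ref{thm:bijectionNPF}, $\alpha\in PF_{n,k}^d$ if and only if $P_\alpha$ is a $k$-lattice path, that is, $P_\alpha$ never rises above the line $y=n-x+k$.

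The remaining task is to show that $P_\alpha$ stays weakly below $y=n-x+k$ precisely when $a_i-1\leq n-i+k$ for all $i$. One direction is immediate: the endpoint $(i,a_i-1)$ of the $i$-th east step lies below the line only if $a_i-1\leq n-i+k$. For the converse, assume all these inequalities and inspect every lattice point of $P_\alpha$. On the initial vertical run at $x=0$ the line sits at height $n+k\geq n$, so nothing there exceeds it. The $i$-th east step is at constant height $a_i-1$ over $x\in\{i-1,i\}$, and the south run immediately following it has top point $(i,a_i-1)$; at these points the line is at height at least $n-i+k\geq a_i-1$, so they too lie below the line. Finally, the last south run at $x=n$ has top point $(n,a_n-1)$, and $a_n-1\leq k$ is exactly the $i=n$ inequality. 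Rewriting $a_i-1\leq n-i+k$ as $a_i\leq n-i+1+k$ and intersecting with the bound $a_i\leq n$ (which holds because $\alpha\in PP_n$) gives $a_i\leq\min(n,n-i+1+k)$; conversely, these inequalities make $P_\alpha$ a $k$-lattice path whose first step is south, so $\alpha\in PF_{n,k}^d$.

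The only mildly delicate point is the converse in the previous paragraph, namely confirming that the inequalities at the east‑step endpoints really control every lattice point of $P_\alpha$, including the two boundary vertical runs; but this is a short case check rather than a genuine obstacle. For readers who would rather avoid the path picture, I would also record a direct pigeonhole argument: if $a_i>n-i+1+k$, then the cars $c_1,\dots,c_i$, whose preferences are all at least $a_i$, can only come to rest in spots $\geq a_i-k\geq n-i+2$, of which there are only $i-1$, so not all of them park and $\alpha\notin PF_{n,k}^d$; conversely, if some car $c_m$ fails to park, then the spots $\max(1,a_m-k),\dots,n$ are all occupied when $c_m$ arrives, which forces $a_m\geq n-m+2+k$ (or the impossible $m\geq n+1$), contradicting $a_m\leq n-m+1+k$.
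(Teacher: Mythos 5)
Your main argument is correct and follows essentially the same route as the paper, which obtains Theorem~\ref{theorem:version2} by reading the inequalities off the lattice-path bijection of Theorem~\ref{thm:bijectionNPF}; the paper compresses this into one sentence, whereas you supply the (worthwhile) verification that the constraints at the east-step right endpoints $(i,a_i-1)$ already control every lattice point of the path, since the line $y=n-x+k$ decreases in $x$ and each vertical run is governed by its top point. Your closing pigeonhole argument, however, is a genuinely different and self-contained route: it proves both directions straight from the $k$-Naples parking rule --- if $a_i>n-i+1+k$ then the $i$ cars $c_1,\dots,c_i$ can only occupy the $i-1$ spots $n-i+2,\dots,n$, and conversely a car $c_m$ that fails to park forces all $n-\max(1,a_m-k)+1$ spots from $\max(1,a_m-k)$ to $n$ to be filled by only $m-1$ earlier cars, yielding $a_m\geq n-m+2+k$. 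The path argument buys alignment with Theorem~\ref{thm:bijectionNPF} and the existing literature; the direct argument buys independence from that bijection and would let Theorem~\ref{theorem:version2} stand on its own.
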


Formulas enumerating the number of parking functions and $k$-Naples parking functions are presented below.
\begin{theorem}[\cite{KW,Pyke}]
For $n\geq 1$,
$$|PF_n| = (n+1)^{n-1}.$$
\end{theorem}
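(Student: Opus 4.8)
The plan is to give Pollak's circular argument, which produces the formula directly and avoids any recursion or appeal to the Dyck path bijection. First I would enlarge the setting to a circular street: place $n+1$ parking spots around a circle, labeled by the residues $0,1,\dots,n$ modulo $n+1$, and consider all $(n+1)^n$ preference sequences $\alpha=(a_1,\dots,a_n)\in\{0,1,\dots,n\}^n$. Running the obvious circular analogue of the parking rule — each car drives to its preferred spot and occupies the first free spot it meets while continuing clockwise — one checks that every car now parks: when car $c_i$ arrives, at most $n-1$ spots are occupied, and on a circle every free spot is eventually reachable. Hence exactly one of the $n+1$ spots is left empty at the end of the process.

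Next I would establish the key equivariance under cyclic rotation. Let $\alpha^{(j)}$ denote the sequence obtained from $\alpha$ by adding $j$ to every coordinate modulo $n+1$. Since relabeling the street and all preferences by the rotation $x\mapsto x+1$ carries the greedy rule to itself, a short induction on the number of cars processed shows that the set of occupied spots produced by $\alpha^{(1)}$ is the rotation by $1$ of the set produced by $\alpha$; iterating, the spot left empty by $\alpha^{(j)}$ is the spot left empty by $\alpha$ shifted by $j$. Therefore the cyclic group $\mathbb{Z}/(n+1)\mathbb{Z}$ acts on preference sequences with all orbits of size exactly $n+1$ (the $n+1$ sequences in an orbit leave distinct spots empty, so are distinct), and each orbit contains exactly one sequence leaving spot $0$ empty. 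This partitions the $(n+1)^n$ sequences into $(n+1)^{n}/(n+1)=(n+1)^{n-1}$ orbits, so there are exactly $(n+1)^{n-1}$ preference sequences that leave spot $0$ empty.

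Finally I would identify those sequences with $PF_n$. If a sequence leaves spot $0$ empty then no car ever occupies spot $0$; in particular no $a_i$ equals $0$ (a car preferring $0$ would fill it or find it already filled), so $\alpha\in[n]^n$, and no car ever wraps past spot $n$ to spot $0$, which is precisely the statement that all $n$ cars park on the linear street with spots $1,\dots,n$. Conversely any $\alpha\in PF_n$, viewed on the circle, never reaches spot $0$ and hence leaves it empty. This bijection gives $|PF_n|=(n+1)^{n-1}$.

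The step I expect to require the most care is the equivariance claim together with the well-definedness of the circular procedure: one must argue cleanly that the greedy "first free spot clockwise" rule commutes with the rotation of the circle and that the process always terminates leaving exactly one empty spot, regardless of $\alpha$. Everything after that — the orbit count and the identification with parking functions — is elementary bookkeeping. (An alternative route would be to count labeled Dyck paths of length $2n$ via Theorem~\ref{thm:dyckPFbijection}, writing $|PF_n|=\sum\binom{n}{\ell_1,\dots,\ell_m}$ over Dyck paths with south-run lengths $\ell_1,\dots,\ell_m$, but evaluating that sum to $(n+1)^{n-1}$ is itself nontrivial, so I would prefer Pollak's argument.)
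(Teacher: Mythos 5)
Your proposal is correct: it is Pollak's classical circular argument, and the outline is complete and sound. Note, however, that the paper does not prove this statement at all --- it is quoted as a known result with citations to Konheim--Weiss and Pyke, so there is no internal proof to compare against. Among the routes available, yours is a genuinely different one from the machinery the paper itself develops: the paper's Corollary~\ref{cor:EnumeratingPF} recovers $(n+1)^{n-1}$ by summing the fiber sizes $\prod_{i=1}^n \len{}{i}{\sigma}$ over $\sigma \in S_n$ (citing Exercise~5.49 of~\cite{EC2} for the evaluation), whereas you bypass permutations and fibers entirely. The two places you flag as delicate are indeed the only ones that need care, and both go through: the circular greedy rule always terminates with exactly one empty spot because a car on a circle with at least one free spot always parks, and the equivariance under $x \mapsto x+1$ follows by induction on the cars processed, since the rule is defined purely in terms of the cyclic order and the set of occupied spots. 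One small refinement worth making explicit in a final write-up: in the orbit count you do not actually need all orbits to have size exactly $n+1$ (which your "distinct empty spots" observation does establish); it suffices that each orbit contains exactly one sequence leaving spot $0$ empty, which already follows from the empty spot being shifted by $j$ under the $j$-fold rotation. Your identification of the spot-$0$-empty sequences with $PF_n$ is also handled correctly, including the subtle point that no car may wrap around past spot $n$.
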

In the next section we give a new formula to enumerate the number of $k$-Naples parking function. We remark that prior to the present work, the only other result enumerating $k$-Naples parking function consists of the following recursive formula.
\begin{theorem}[Theorem 1.1,~\cite{NaplesPF}]\label{thm:RecEnumNPF}
For $n\geq 0$ and $0\leq k \leq n$,
$$|PF_{n+1,k}| = \sum_{i=0}^n \binom{n}{i} \min(i+1+k, n+1) |PF_{i,k}| (n-i+1)^{n-i-1}.$$
\end{theorem}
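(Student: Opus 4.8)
The plan is to prove the recursion by decomposing $PF_{n+1,k}$ according to the spot in which the last car parks. Fix $\alpha=(a_1,\dots,a_{n+1})\in PF_{n+1,k}$ and let the cars park under the $k$-Naples rule. Since $c_{n+1}$ is processed last, by the time it parks every other car is already parked, so exactly one spot $s=s(\alpha)\in[n+1]$ is empty and $c_{n+1}$ occupies it. Writing $i=s-1\in\{0,1,\dots,n\}$, I would show that the number of $\alpha\in PF_{n+1,k}$ with $s(\alpha)=i+1$ equals $\binom{n}{i}\min(i+1+k,n+1)|PF_{i,k}|(n-i+1)^{n-i-1}$, and then sum over $i=0,\dots,n$.

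The key structural point is that, because spot $i+1$ stays empty until the final step, it behaves like a shield separating the block $\{1,\dots,i\}$ from the block $\{i+2,\dots,n+1\}$: any car other than $c_{n+1}$ that would leave $\{1,\dots,i\}$ by moving forward, or leave $\{i+2,\dots,n+1\}$ by backing up, would park in spot $i+1$, which is impossible. From this one deduces: (a) the $i$ cars parking in $\{1,\dots,i\}$ have preferences in $\{1,\dots,i\}$ and obey the $k$-Naples rule on that block, so once we record which $i$ of $c_1,\dots,c_n$ they are ($\binom{n}{i}$ choices) their preference vector ranges over all of $PF_{i,k}$; (b) the other $n-i$ cars park in $\{i+2,\dots,n+1\}$ using preferences in that range and never backing across the shield; (c) since $c_{n+1}$ meets a fully occupied street with only spot $i+1$ free, it lands there exactly when $a_{n+1}$ can reach spot $i+1$, i.e.\ $a_{n+1}\in\{1,\dots,i+1\}$ (driving forward) or $a_{n+1}\in\{i+2,\dots,i+1+k\}$ (backing up at most $k$), intersected with $[n+1]$, giving the factor $\min(i+1+k,n+1)$ independently of (a) and (b). I would then verify the converse: any choice in (a)--(c) reassembles into a genuine $(n+1)$-Naples parking function with $c_{n+1}$ in spot $i+1$, using that the left-block cars fill $\{1,\dots,i\}$ no matter how their moves are interleaved with the right-block cars (whose preferences avoid $\{1,\dots,i+1\}$) and with $c_{n+1}$ (processed last).

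The step I expect to be the main obstacle is showing that the configurations in (b) are counted by $|PF_{n-i}|=(n-i+1)^{n-i-1}$. Relabeling $\{i+2,\dots,n+1\}$ as $\{1,\dots,n-i\}$, these are the length-$(n-i)$ preferences that park under the $k$-Naples rule with the extra constraint that no car ever backs past the left end into the forbidden, always-empty shield; equivalently, no car with preference at most $k$ is ever processed while spots $1,\dots,(\text{its preference})$ are all occupied. When $k=0$ there is no backing up, this set is literally $PF_{n-i}$, and the whole argument collapses to the classical recursion $|PF_{n+1}|=\sum_{i=0}^n\binom{n}{i}(i+1)|PF_i||PF_{n-i}|$. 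For $k\ge 1$, however, right-block cars may still back up within their block, so this set is not equal to $PF_{n-i}$ on the nose, and one must exhibit a bijection with $PF_{n-i}$ (or count it independently). I would look for such a bijection by "straightening" each back-up move of a right-block car into the forward configuration it produces and checking that the shield condition is exactly what makes this reversible; the degenerate cases $i=0$ (empty left block) and $i=n$ (empty right block, with $\min(i+1+k,n+1)=n+1$) should then fall out directly.
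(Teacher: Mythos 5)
This theorem is quoted from~\cite{NaplesPF} and the paper gives no proof of it, so there is no in-paper argument to compare against; judged on its own terms, your decomposition --- conditioning on the spot $i+1$ in which the last car $c_{n+1}$ parks, observing that this spot acts as a shield splitting the street into two non-interacting blocks, and reading off the factors $\binom{n}{i}$, $|PF_{i,k}|$ and $\min(i+1+k,n+1)$ --- is the right architecture and is essentially the strategy of the original source. Your verification of the factor $\min(i+1+k,n+1)$ for $a_{n+1}$, the self-containment of the left block, and the reassembly of the three pieces are all sound.

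The genuine gap is exactly where you flag it: step (b) is the only nontrivial count in the whole argument, and you leave it unresolved. After relabeling, the right-block configurations are the $\beta\in[m]^m$ (with $m=n-i$) that park all $m$ cars under the $k$-Naples rule \emph{and} never back up into the shield (``spot $0$''). For $k\geq 1$ this set is neither contained in nor contains $PF_m$: with $k=1$ and $m=3$, the vector $(3,2,2)$ is a valid shielded configuration but not a parking function, while $(1,1,2)$ is a parking function but sends its second car into the shield. So a bijection with $PF_m$ must be exhibited, and the ``straightening'' you propose is not defined, nor is it clear it would be reversible (straightening a back-up move can create a preference vector whose forward process visits spots in a different order, so injectivity is not automatic). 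The clean way to close this gap is Pollak's circular argument: place $m+1$ spots $0,1,\dots,m$ on a cycle and run the (rotation-equivariant) $k$-Naples rule, under which every vector in $(\mathbb{Z}_{m+1})^m$ parks all cars and leaves exactly one spot empty; rotation acts freely on preference vectors and shifts the empty spot accordingly, so exactly $(m+1)^m/(m+1)=(m+1)^{m-1}$ vectors leave spot $0$ empty, and these are precisely the shielded linear configurations, since a car can neither visit an empty spot without parking there nor find spot $0$ occupied. Until this lemma (or an equivalent) is supplied, the proof is incomplete at its central step.
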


\section{Counting  \texorpdfstring{$k$}{k}-Naples parking functions through permutations}\label{section:permutations} 

In this section, we provide a non-recursive formula for $|PF_{n,k}|$ as a sum over permutations.
We begin by establishing the formula for the case $k=0$, the case of parking functions, as it helps build intuition to generalize the result to $k$-Naples parking functions for $k>0$.

\subsection{The parking functions case}\label{subsect:PFandpermutations}
Given a parking function, we consider the permutation, written in \textit{one-line notation}, resulting from recording the position in which each car parks. 

\begin{definition}\label{def:varphi}
Given a parking function $\alpha$, consider the function $\varphi: PF_n \to S_n$ which maps $\alpha = (a_1,a_2,\ldots, a_n) \in PF_n$ to $\varphi(\alpha) = s_1s_2\cdots s_n$, where parking spot $i$ is occupied by the ${s_i}^{\text{th}}$ car.
\end{definition}
We remark that the map $\varphi$ is the inverse of the \textit{outcome map} defined in~\cite{Colaric2020IntervalPF} and slightly different from the definition appearing in \textit{FindStat St001346}~\cite{FindStat}.
However, our results follow more naturally by defining $\varphi$ as in Definition~\ref{def:varphi}.

In order for a parking preference to be a parking function, all the cars have to park and there cannot be two cars in the same parking space, therefore $\varphi$ is well-defined. 
Moreover, every permutation is a parking function.
In fact, $\varphi$ maps each permutation to itself, so $\varphi$ is surjective.  To illustrate Definition \ref{def:varphi} we consider the following example.
\begin{example}\label{example:permutations1}
Let $\alpha = (4,2,2,4,1)\in PF_5$, so the cars $c_1,c_2,c_3,c_4,c_5$ park in the following way 
$$\begin{array}{c|c|c|c|c}
\text{spot } 1 & \text{spot } 2 & \text{spot } 3 & \text{spot } 4 & \text{spot } 5\\\hline
c_5 & c_2 & c_3 & c_1 & c_4 
\end{array} 
$$
Therefore, $\varphi(\alpha) = 52314$.
\end{example}

Now that we have a well-defined map from parking functions to permutations, a natural question to ask is how many parking functions map to a given permutation. 
In other words, given $\sigma \in S_n$, we would like to determine the size of the fiber $\left|\varphi^{-1}(\sigma)\right|$. 
The following result answers the aforementioned question.
\begin{proposition}\label{prop:permutation}
Let $\sigma = s_1\cdots s_n\in S_n$ be a permutation. 
Then $$|\varphi^{-1}(\sigma)|=\prod_{i=1}^n \len{}{i}{\sigma}$$
where $\len{}{i}{\sigma}$ is the length of the longest subsequence $s_j\cdots s_i$ of $\sigma$ such that $s_t\leq s_i$ for all $j\leq t\leq i$.  
\end{proposition}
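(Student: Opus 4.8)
The plan is to characterize, for a fixed target permutation $\sigma = s_1\cdots s_n$, exactly which parking preferences $\alpha = (a_1,\ldots,a_n)$ satisfy $\varphi(\alpha)=\sigma$, and then count them by a product over the positions $i = 1,\ldots, n$. The key observation is that $\varphi(\alpha)=\sigma$ means car $c_{s_i}$ ends up in spot $i$; equivalently, reading the cars in the order they appear in the street, $c_{s_1}$ parks in spot $1$, then $c_{s_2}$ in spot $2$, and so on. So I would reindex: for each car $c_m$, let $p(m)$ be the spot it occupies, i.e. $p = \sigma^{-1}$ as a function. The question becomes: for how many preference tuples does the parking process (cars processed in index order $c_1,\ldots,c_n$) place car $c_m$ in spot $p(m)$ for every $m$?

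First I would process the cars in their \emph{parking} order rather than their index order, which is the order $c_{s_1}, c_{s_2},\ldots, c_{s_n}$ occupying spots $1,2,\ldots,n$ left to right. When it is car $c_{s_i}$'s turn to be placed (in spot $i$), the spots $1,\ldots,i-1$ are exactly the ones already filled. For this car to land in spot $i$ under the (classical, $k=0$) parking rule, its preference $a_{s_i}$ must be a spot from which driving forward past the occupied spots first reaches spot $i$. Concretely, $a_{s_i} = i$ is always allowed, and $a_{s_i} = j < i$ is allowed precisely when spots $j, j+1, \ldots, i-1$ are all already occupied at that moment — i.e. when $s_j, s_{j+1}, \ldots, s_{i-1}$ all appear before $s_i$ in the processing order, which (since processing order agrees with the left-to-right spot order, and the already-filled spots are exactly $\{1,\ldots,i-1\}$) is automatic. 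Wait — I need to be careful here: the subtlety is that the \emph{processing order} is the car-index order $c_1,\ldots,c_n$, not the spot order. So the real constraint is: $a_{s_i} = j$ is a valid preference for $c_{s_i}$ landing in spot $i$ if and only if, at the instant $c_{s_i}$ is processed, spots $j,\ldots,i-1$ are occupied and spot $i$ is free; combined across all cars this must be simultaneously consistent. The clean way to see consistency is to note that if $c_{s_i}$ prefers $j$ and lands in $i$, then each of spots $j,\ldots,i-1$ is occupied by some car processed earlier than $c_{s_i}$, hence $s_j,\ldots,s_{i-1}$ all precede $s_i$ in index order; conversely one checks that any choice of preferences respecting, for each $i$, that $a_{s_i}\in\{j : s_j,\ldots,s_{i-1} \text{ all} < s_i\}\cup\{i\}$ does produce outcome $\sigma$, by induction on the processing order. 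The set of allowed values of $a_{s_i}$ is therefore $\{j, j+1, \ldots, i\}$ where $j$ is minimal such that $s_j, s_{j+1},\ldots,s_{i-1}$ all precede $s_i$ in value order — but "precede in index order" here, with $\sigma$ written in one-line (spot) notation, translates to $s_t \le s_i$ for all $j \le t \le i$, because $s_i$ being processed after all of them means $s_t < s_i$ as car indices. That is exactly $\len{}{i}{\sigma}$ many choices for $a_{s_i}$.

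Having established that the valid preferences are exactly those with, independently for each $i$, the value $a_{s_i}$ ranging over a set of size $\len{}{i}{\sigma}$, the count $|\varphi^{-1}(\sigma)| = \prod_{i=1}^n \len{}{i}{\sigma}$ follows immediately since the choices at different positions are independent. I would organize the write-up as: (1) a lemma reformulating $\varphi(\alpha)=\sigma$ as a per-car landing condition; (2) the core claim that the feasible set for $a_{s_i}$ is the interval $\{i-\len{}{i}{\sigma}+1,\ldots,i\}$, proved by induction on $i$ processing cars in spot order and verifying the forward/backward implications; (3) the product count.

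The main obstacle I anticipate is step (2), specifically showing the \emph{independence} of the per-position choices — i.e. that \emph{every} tuple meeting the local constraints actually has outcome $\sigma$, not just that outcomes of $\sigma$ meet the local constraints. The risk is a hidden global interaction: choosing $a_{s_i}$ small for several $i$ might, a priori, cause some car to overshoot its intended spot because an "earlier-in-index" car that was supposed to occupy a blocking spot has not yet been placed. The resolution is that the relevant blocking spots $j,\ldots,i-1$ for car $c_{s_i}$ are occupied by cars $c_{s_j},\ldots,c_{s_{i-1}}$, all of which have \emph{smaller index} than $c_{s_i}$ (that is precisely the condition $s_t \le s_i$), hence are already parked when $c_{s_i}$ is processed, regardless of \emph{their} preferences; so the induction goes through cleanly once the processing is reorganized to exploit this. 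I would make sure the induction hypothesis is strong enough — "after processing the first $m$ cars in index order, the occupied spots are exactly $\{p(c_{\text{processed}})\}$ and coincide with an initial-segment-compatible pattern" — to close the argument.
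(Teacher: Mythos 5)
Your proposal is correct and follows essentially the same route as the paper's proof: both reindex via the inverse permutation so that each car $c_i$ is associated to its spot $\pi(i)$, both show that the feasible preferences for that car form exactly the interval of positions counted by $\len{}{\pi(i)}{\sigma}$ (necessity from the blocking spots being occupied by lower-indexed cars, sufficiency by checking the forward-drive rule), and both conclude by taking the product over independent choices. The independence concern you flag is resolved exactly as you anticipate, and is handled (somewhat more tersely) in the paper's ``Fact 2.''
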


Before establishing Proposition~\ref{prop:permutation}, we illustrate the idea of the proof with an example.
\begin{example}\label{ex:example3.4}
Let $\sigma = 23514$.
We want to count the number of parking functions $\alpha$ such that $\varphi(\alpha)=\sigma$ by determining all possible entries of $\alpha$ given the entries of $\sigma$. To begin, we let entries of $\sigma=s_1s_2s_3s_4s_5$ and consider each $s_i$ for $1\leq i\leq 5$ individually:
\begin{itemize}
\item[-] Since $s_1=2$, we know that $c_2$ parked in spot~1. This implies that $c_2$ preferred the parking spot $1$. Thus $\len{}{1}{23514}=1$, which is the length of the subsequence $2$ in~$\sigma$. 

\item[-] Since $s_2=3$, we know that $c_3$ parked in spot 2. 
This implies that $c_3$ preferred the parking spots $1$ or $2$. 
Thus $\len{}{2}{23514}= 2$, which is the length of the subsequence $23$ in~$\sigma$. 

\item[-] Since $s_3=5$, we know that $c_5$ parked in spot 3. 
This implies that $c_5$ preferred the parking spots $1$, $2$, or $3$. 
Thus $\len{}{3}{23514}= 3$, which is the length of the subsequence $235$ in~$\sigma$. 

\item[-] Since $s_4=1$, we know that $c_1$ parked in spot 4. 
This implies that $c_1$ preferred the parking spot $4$. 
Thus $\len{}{4}{23514}= 1$, which is the length of the subsequence $1$ in~$\sigma$. 

\item[-] Finally, since $s_5=4$, we know that $c_4$ parked in spot 5. 
This implies that $c_4$ preferred the spot $4$ or $5$. 
Thus $\len{}{5}{23514} = 2$, which is the length of the subsequence $14$ in $\sigma$. 
\end{itemize}

Multiplying all values of $\len{}{i}{23514}$ for $1\leq i\leq 5$, we have 12 distinct parking functions of length 5 mapping to $\sigma=23514$ via the map $\varphi$. We list these 12 parking functions below:
$$
\begin{array}{cccccc}
41141, & 41142, & 41143, & 41151, & 41152, & 41153, \\
41241, & 41242, & 41243, & 41251, & 41252, & 41253. 
\end{array}
$$
\end{example}
We are now ready to establish Proposition~\ref{prop:permutation}. 
\begin{proof}[Proof of Proposition~\ref{prop:permutation}]\label{proof:PermProp}
Let $\alpha = (a_1,\ldots, a_n)\in \varphi^{-1}(\sigma)$, $\sigma=s_1s_2\cdots s_n\in S_n$, and define $\pi \in S_n$ in such a way that $s_{\pi(i)} = i$ for all $i\in[n]$. That is, $c_i$ parks in the parking spot $\pi(i)$. Note that this corresponds to the \emph{outcome map}~\cite{Colaric2020IntervalPF}.
Now, since $\pi$ is a permutation in $S_n$, we have that
\[
\prod_{i=1}^n \len{}{\pi(i)}{\sigma} = \prod_{i=1}^n \len{}{i}{\sigma}
\]
as the product of the lengths only changes the order of the terms.
Thus, it suffices to show
\begin{align}\label{eq:ellpi}
|\varphi^{-1}(\sigma)| &= \prod_{i=1}^n \len{}{\pi(i)}{\sigma}.
\end{align}
To do so, for $i\in[n]$ we claim that $\len{}{\pi(i)}{\sigma}$ is the number of possible preferred parking spots $a_i$ of $c_i$ satisfying $\varphi(\alpha) = \sigma$. Proving this then implies that \eqref{eq:ellpi} follows by taking the product over all possibilities for $a_1,\ldots, a_n$.

Let us prove our claim:
\begin{center}
\textit{Given $c_i$, $\len{}{\pi(i)}{\sigma}$ is the number of possible preferred parking spots $a_i$ such that $\varphi(\alpha) = \sigma$}.
\end{center}
To establish this claim we prove the following:
\begin{enumerate}
    \item[Fact 1:] Every possible $a_i$ is an index of the longest subsequence $s_j\cdots s_{\pi(i)}$ of $\sigma$ such that $s_t\leq s_{\pi(i)}$ for all $j\leq t \leq \pi(i)$.
    \item[Fact 2:] Every index of the longest subsequence $s_j\cdots s_{\pi(i)}$  of $\sigma$ such that $s_t\leq s_{\pi(i)}$ for all $j\leq t \leq \pi(i)$ gives a possible $a_i$.
\end{enumerate} 
We begin by establishing Fact 1.
By definition, $c_i$ parks in the spot numbered $\pi(i)$. 
Otherwise, $c_i$ would not be able to park in spot $\pi(i)$. More explicitly, we must have either
\begin{enumerate}
    \item[(i)] $c_i$ parks in its preferred spot, and so $a_i = \pi(i)$, or
    \item[(ii)] $c_i$ tries to park in a previous spot $a_i < \pi(i)$ but the parking spots $a_i,\ldots, \pi(i)-1$ are all occupied. 
    Thus $\{a_i,\ldots, \pi(i)-1\} \subset \{\pi(1), \ldots, \pi(i-1)\}$, so $\{s_{a_i},\ldots, s_{\pi(i)-1}\} \subset \{s_{\pi(1)}, \ldots, s_{\pi(i-1)}\}\\=\{1,\ldots,i-1\}$.
    This means that for any choice of $t$ satisfying $a_i\leq t<\pi(i)$, there exists $r\in \{1,\ldots, i-1\}$ such that $s_t = s_{\pi(r)} = r < i = s_{\pi(i)}$. 
    
    \end{enumerate} 

Conditions (i) and (ii) imply that for all $t$ satisfying $a_i\leq t\leq \pi(i)$, we have $s_t \leq s_{\pi(i)}$, where the equality arises from condition (i). 
Hence, we have a necessary condition for $a_i$, which completes the proof of Fact 1. 

Now we prove Fact 2. 
Consider $s_j\cdots s_{\pi(i)}$ such that $s_t\leq s_{\pi(i)}$, for all $j\leq t\leq \pi(i)$. 
This means spots $j,\ldots, \pi(i)-1$ have been occupied before $c_i$ attempts to park.
If $a_i=\pi(i)$, then $c_i$ parks in spot $\pi(i)$ and we are done. 
If $j\leq a_i< \pi(i)$, as we know the spots $j,\ldots,\pi(i)-1$ are occupied, so $c_i$ will find $a_i$ occupied and by the parking rule will move forward and park at the first available spot which is spot $\pi(i)$. 
This establishes Fact 2 as the number of possibilities for $a_i$ is precisely the length of the longest subsequence $s_j\cdots s_{\pi(i)}$ such that $s_t\leq s_{\pi(i)}$, for all $j\leq t\leq \pi(i)$.
\end{proof}

Summing over the fiber for each permutation gives the number of parking functions.
\begin{corollary}[Exercise 5.49(d,e)~\cite{EC2}]\label{cor:EnumeratingPF}
$$\sum_{\sigma \in S_n}{\bigg(\prod_{i = 1}^n \len{}{i}{\sigma} \bigg)} = |PF_n| = (n+1)^{n-1}$$
where for each $\sigma = s_1\cdots s_n \in S_n$, $\len{}{i}{\sigma}$ is the length of the longest subsequence $s_j\ldots s_i$ of $\sigma$ such that $s_t\leq i$ for all $j\leq t\leq i$.  
\end{corollary}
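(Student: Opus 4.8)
The plan is to read the identity straight off Proposition~\ref{prop:permutation}, using only that $\varphi$ is a surjection whose fibers partition $PF_n$. First I would recall from the paragraph following Definition~\ref{def:varphi} that $\varphi\colon PF_n\to S_n$ is well defined --- in a parking function every car parks and no two cars occupy the same spot, so $\varphi(\alpha)=s_1\cdots s_n$ is genuinely a permutation --- and that it is surjective, since any $\sigma\in S_n$ regarded as a parking preference has outcome $\sigma$, so $\varphi(\sigma)=\sigma$. Therefore the sets $\varphi^{-1}(\sigma)$, for $\sigma$ ranging over the finite set $S_n$, are nonempty and partition $PF_n$, which gives $|PF_n|=\sum_{\sigma\in S_n}|\varphi^{-1}(\sigma)|$.

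Next I would substitute the fiber count. By Proposition~\ref{prop:permutation}, for each $\sigma=s_1\cdots s_n\in S_n$ we have $|\varphi^{-1}(\sigma)|=\prod_{i=1}^n\len{}{i}{\sigma}$, where $\len{}{i}{\sigma}$ is precisely the quantity named in the corollary statement. Plugging this into the previous display yields the left-hand equality $\sum_{\sigma\in S_n}\big(\prod_{i=1}^n\len{}{i}{\sigma}\big)=|PF_n|$. The right-hand equality $|PF_n|=(n+1)^{n-1}$ is then the classical enumeration of parking functions of Konheim--Weiss and Pyke~\cite{KW,Pyke}, already recorded in the excerpt, so nothing further is required. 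One small point I would make explicit for cleanliness: the $\len{}{i}{\sigma}$ appearing in the corollary is the same family of numbers that enters Proposition~\ref{prop:permutation} after reindexing by the outcome permutation $\pi$ (this is exactly the observation $\prod_i\len{}{\pi(i)}{\sigma}=\prod_i\len{}{i}{\sigma}$ used in that proof), so the two products literally agree term for term up to reordering.

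I expect essentially no obstacle here: all of the combinatorial content is carried by Proposition~\ref{prop:permutation}, and the corollary is just the bookkeeping consequence of summing fiber sizes over $S_n$ and quoting a known count. If one instead wanted a self-contained route to $(n+1)^{n-1}$ that does not cite~\cite{KW,Pyke}, one could feed this fiber description into a standard cycle-lemma or generating-function computation, but that is beyond what the statement asks and I would not pursue it.
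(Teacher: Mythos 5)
Your proposal is correct and matches the paper's own (one-line) justification: the fibers of $\varphi$ partition $PF_n$, so summing the fiber sizes from Proposition~\ref{prop:permutation} over $S_n$ gives $|PF_n|$, and the value $(n+1)^{n-1}$ is the classical count. Nothing further is needed.
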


\begin{remark} 
Recently, Sanyal and Drohla~\cite{SanyalDrohla} gave a different formula for counting parking functions, which they then related to binary trees.
\end{remark}

\subsection{ \texorpdfstring{$k$}{k}-Naples parking functions}\label{subsect:kNaplesandPermutations}
We now generalize Corollary~\ref{cor:EnumeratingPF} to $k$-Naples parking functions. To start, we generalize the map $\varphi$.

\begin{definition}\label{def:varphi_k}
Given a $k$-Naples parking function, consider the function $\varphi_k: PF_{n,k} \to S_n$ given by mapping a $k$-Naples parking function $\alpha \in PF_{n,k}$ to the permutation denoting the position in which the cars park under $\alpha$ using the \textbf{$k$-Naples parking rule}. 
That is, given $\alpha = (a_1,a_2,\ldots, a_n) \in PF_n$, $\varphi(\alpha) = s_1s_2\cdots s_n$, where the spot $i$ is occupied by the ${s_i}^{\text{th}}$ car.
\end{definition}

As before, $\varphi_k$ is the identity on $S_n\subset PF_{n,k}$ for all $k$.
Moreover, from Definition \ref{def:varphi} we note that $\varphi = \varphi_0$. 
However, $\varphi_k$  is not an extension of $\varphi$ for $k>0$. 
Moreover, we note that the order in which cars park under $\varphi_k$ might change when allowing the cars to back up one additional spot, as they might find that spot available rather than moving forward, in which case $\varphi_{k+1}$ would be different than $\varphi_{k}$.
We illustrate this in the following example. 

\begin{example}
Consider $\alpha = (4,2,2,4,1)$.
By Example~\ref{example:permutations1}, we know that $\alpha$ is a parking function. 
In fact, $\alpha$ is also a $1$-Naples parking function. 
Under the $1$-Naples parking rule, the cars park in the following order:
$$\begin{array}{c|c|c|c|c}
\text{spot } 1 & \text{spot } 2 & \text{spot } 3 & \text{spot } 4 & \text{spot } 5\\\hline
c_3 & c_2 & c_4 & c_1 & c_5 
\end{array} 
$$
Therefore, $\varphi_1(\alpha) = 32415$. 
From Example~\ref{example:permutations1}, we note that $\varphi_1(\alpha)\neq \varphi(\alpha)$.
\end{example}

Just as $\len{}{i}{\sigma}$ was defined in terms of the subsequences to the left (see Proposition~\ref{prop:permutation}), we must now extend its definition and consider also subsequences to the right. This allows us to account for the backwards movement of the cars arising from the $k$-Naples parking rule.  

\begin{definition}\label{def:subsequences}
Let $\sigma = s_1\cdots s_n\in S_n$ be a permutation. For each $1\leq i\leq n$, let $\ileft{k}{i}{\sigma}$ be the length of longest subsequence $s_j\cdots s_{i-1}$ of $\sigma$ such that $s_t < s_i$, for all $j\leq t< i$ and let $\iright{k}{i}{\sigma}$ be the length of longest subsequence $s_i\cdots s_r$ of $\sigma$ such that $r\leq i+k$ and $s_t\leq s_i$ for all $i\leq t\leq r$. If these subsequences are empty, $\ileft{k}{i}{\sigma} =0$ or $\iright{k}{i}{\sigma}=0$, respectively. 
Let $\len{k}{i}{\sigma}$ be the function defined by
\[\len{k}{i}{\sigma}=
\begin{cases}
    \ileft{k}{i}{\sigma} + \iright{k}{i}{\sigma} & \text{if $\ileft{k}{i}{\sigma} = i-1$} \\
    \max(\ileft{k}{i}{\sigma}-k, 0) + \iright{k}{i}{\sigma} & \text{if $\ileft{k}{i}{\sigma} < i-1$.}
\end{cases}\]
\end{definition}

Before working through an example illustrating Definition~\ref{def:subsequences} we remind the reader that the parameter $k$ is the same as the parameter defining the $k$-Naples parking functions. Moreover, we can also restate the definition of $\len{k}{i}{\sigma}$ in terms of the $\len{}{i}{\sigma}$ and the reversal map $\text{rev}(\sigma)$ which sends $\sigma_i$ to $\sigma_{n-i+1}$.

\begin{remark}
The values \textnormal{$\ileft{k}{i}{\sigma}$} and \textnormal{$\iright{k}{i}{\sigma}$} can be expressed in terms of $\len{}{i}{\sigma}$.  In particular,
\begin{itemize}
    \item[] \textnormal{$\ileft{k}{i}{\sigma}=\len{}{i}{\sigma}-1$}
    \item[] \textnormal{$\iright{k}{i}{\sigma}=\min(k+1,\len{}{i}{\textnormal{rev}(\sigma)})$}.
\end{itemize}
Therefore, $\len{k}{i}{\sigma}$ can also be expressed in terms of $\len{}{i}{\sigma}$.  In particular,
\[\len{k}{i}{\sigma}=
\begin{cases}
    \len{}{i}{\sigma}-1 + \min(k+1,\len{}{i}{\textnormal{rev}(\sigma)} & \text{if $\len{}{i}{\sigma}  = i$} \\
    \max(\len{}{i}{\sigma}-1-k, 0) + \min(k+1,\len{}{i}{\textnormal{rev}(\sigma)} & \text{if $\len{}{i}{\sigma} < i$}.
\end{cases}\]
\end{remark}

\begin{example}\label{ex:details}
Let $n=5$ and $k=2$, and consider the permutation $\sigma = 51423\in S_5$. Then, $\ileft{2}{1}{51423}= \ileft{2}{2}{51423}=\ileft{2}{4}{51423}=0$ because the corresponding subsequences are all the empty subsequence. Moreover, $\ileft{2}{3}{51423}=\ileft{2}{5}{51423} = 1$, corresponding to the subsequences $1$ and $2$, respectively. 

Similarly, $\iright{2}{1}{51423}= \iright{2}{3}{51423} =3$, corresponding to the subsequences $514$ and $423$, respectively, and $\iright{2}{2}{51423}= \iright{2}{4}{51423}= \iright{2}{5}{51423} = 1$, corresponding to the subsequences $1$, $2$ and $3$, respectively.

Therefore, we get the following values for $\len{k}{i}{51423}$: $\len{2}{1}{51423} = 3$, $\len{2}{2}{51423} = 1$, $\len{2}{3}{51423}= \max(1-2,0) + 3 = 3 $, $\len{2}{4}{51423}= 1$, and $\len{2}{5}{51423} = \max(1-2,0)+1 = 1 $.

A straight forward computation establishes that there are $9$ distinct 2-Naples parking functions of length 5 whose image under $\varphi_2$ is the permutation 51423. We list them below,
\[
\begin{array}{ccccccccc}
24531 & 24532 & 24533 & 24541 & 24542 & 24543 & 24551 & 24552 & 24553.
\end{array}
\]
In our next result we establish that the count of these nine distinct $2$-Naples parking functions arises as the product of the values $\len{k}{i}{\sigma}$, which from above we note is given by $3\times 1\times 3\times 1\times 1=9$.
\end{example}

We are now ready to state the main result.
\begin{theorem}\label{thm:permutations-k}
Let $\sigma \in S_n$ be a permutation. Then,
$$|\varphi_k^{-1}(\sigma)| = \prod_{i=1}^n\len{k}{i}{\sigma}.$$
\end{theorem}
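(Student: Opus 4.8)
The plan is to mimic the structure of the proof of Proposition~\ref{prop:permutation}, but carefully tracking how the $k$-Naples backing-up rule changes which preferences $a_i$ land car $c_i$ in spot $\pi(i)$, where as before $\pi \in S_n$ is defined by $s_{\pi(i)} = i$. As in the $k=0$ case, it suffices to show that for each $i \in [n]$, the number of preferred spots $a_i$ consistent with $\varphi_k(\alpha) = \sigma$ is exactly $\len{k}{\pi(i)}{\sigma}$; then taking the product over $i$ and reindexing by the permutation $\pi$ gives the claimed formula. So I would fix $i$, set $p = \pi(i)$ (the spot $c_i$ occupies), and enumerate the valid $a_i$.

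The analysis splits according to whether $c_i$ parks by \emph{backing up} into spot $p$ or by \emph{moving forward} into spot $p$ (the case $a_i = p$ can be absorbed into either). For $c_i$ to reach spot $p$ by backing up from a higher preference $a_i > p$, the rule requires $a_i - p \leq k$ (it backs up at most $k$ spots) and requires spot $p$ to be empty when $c_i$ arrives, i.e.\ $p \notin \{\pi(1),\dots,\pi(i-1)\}$ — equivalently $s_p \not< s_p$... more precisely, backing up into $p$ is possible precisely when among the spots $a_i, a_i-1, \dots, p+1$ every one is occupied by an earlier car and $p$ is not. Translating "occupied by an earlier car" into the statement $s_t < s_p = i$ for the relevant indices $t$, exactly as in Fact 1 and Fact 2 of Proposition~\ref{prop:permutation}, the set of $a_i > p$ that work by backing up is an interval of indices $t$ immediately to the right of $p$ with $s_t \le s_p$, truncated at length $k$ and at the first index where $s_t > s_p$; this is precisely what $\iright{k}{p}{\sigma}$ (minus the contribution of $a_i=p$ itself, which I'll bookkeep) counts, via the identity $\iright{k}{p}{\sigma} = \min(k+1, \len{}{p}{\mathrm{rev}(\sigma)})$ recorded in the Remark. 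For $c_i$ to reach $p$ by moving forward from a lower preference $a_i \le p$, the classical analysis of Proposition~\ref{prop:permutation} applies to the left side: the valid $a_i$ are the indices of the maximal run $s_j \cdots s_{p}$ with $s_t \le s_p$ for $j \le t \le p$, which has length $\ileft{k}{p}{\sigma}+1$. The subtlety is that when $c_i$ preferred a spot $a_i$ to the left and found it occupied, under the $k$-Naples rule it \emph{first} backs up (up to $k$ spots) before going forward — so some of those leftward preferences might cause $c_i$ to settle in a spot \emph{before} $p$ rather than continuing to $p$. This is exactly where the two cases in the definition of $\len{k}{p}{\sigma}$ come from: if $\ileft{k}{p}{\sigma} = p-1$ (the whole prefix is $\le s_p$, so spots $1,\dots,p-1$ are all occupied and backing up from any $a_i \le p-1$ finds nothing free), all $p-1$ leftward preferences still funnel $c_i$ forward to $p$, giving the summand $\ileft{k}{p}{\sigma}$; otherwise the leftmost $k$ of those leftward preferences would instead let $c_i$ back up into a free spot to the left of $j$, so only $\max(\ileft{k}{p}{\sigma} - k, 0)$ of them remain valid.

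Concretely the key steps in order are: (1) reduce to the per-car count $\len{k}{\pi(i)}{\sigma}$ exactly as in the proof of Proposition~\ref{prop:permutation}; (2) establish the "forward" contribution — the valid $a_i \le p$ — reusing Facts 1 and 2 from that proof, but then carefully subtracting the leftward preferences from which the $k$-Naples rule causes $c_i$ to back up short of $p$, producing either $\ileft{k}{p}{\sigma}$ or $\max(\ileft{k}{p}{\sigma}-k,0)$ according to the dichotomy $\ileft{k}{p}{\sigma} = p-1$ or $< p-1$; (3) establish the "backward" contribution — the valid $a_i > p$ with $a_i - p \le k$ — by the mirror-image argument, showing it equals $\iright{k}{p}{\sigma}$ (with the overlap at $a_i = p$ counted once); (4) check that the forward and backward sets of preferences are disjoint apart from the single shared value $a_i = p$, so their sizes add to give $\len{k}{p}{\sigma}$; (5) take the product over $i$ and reindex. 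I expect step (2) — correctly accounting for the interaction between "preferred a spot to the left, then backed up" under the $k$-Naples rule, and seeing that this is exactly the $-k$ correction in the second case of Definition~\ref{def:subsequences} — to be the main obstacle, since this is the genuinely new phenomenon absent when $k=0$; steps (3) and (4) should be routine once the bookkeeping of the shared value $a_i=p$ is fixed, and step (1) is verbatim from the $k=0$ proof.
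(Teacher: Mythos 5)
Your proposal is correct and takes essentially the same approach as the paper's proof: both reduce to counting, for each car $c_i$, the admissible preferences $a_i$ that land it in spot $\pi(i)$, split these into the backward contribution counted by $\iright{k}{\pi(i)}{\sigma}$ and the forward contribution governed by the dichotomy $\ileft{k}{\pi(i)}{\sigma}=\pi(i)-1$ versus $<\pi(i)-1$, and identify the $\max(\ileft{k}{\pi(i)}{\sigma}-k,0)$ correction with exactly the leftward preferences from which the $k$-Naples rule would let $c_i$ back into the free spot just left of the run. Your explicit bookkeeping of the overlap at $a_i=\pi(i)$ and the disjointness of the two preference sets is a minor organizational improvement over the paper's presentation, but the argument is the same.
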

In particular, for $k=0$, we obtain the result in Proposition~\ref{prop:permutation}.

\begin{proof}
Let $\alpha = (a_1, \ldots, a_n)\in \varphi_k^{-1}(\sigma)$ and take $\pi \in S_n$ such that $s_{\pi(i)} = i$. That is, $c_i$ parks in the parking spot $\pi(i)$.
To establish the result it suffices to show the following:

\begin{quote}
    \textbf{Claim:} \textit{For each $1\leq i\leq n$, $\len{k}{\pi(i)}{\sigma}$ is the number of possible preferred parking spots $a_i$ which allows $c_i$ to park in spot $\pi(i)$.}
\end{quote}

Proving this claim then implies the result as we take the product over all possibilities for $a_1,\ldots, a_n$. Let us prove the claim.

Given $i$, let $s_j\cdots s_{\pi(i) - 1}$ be the longest subsequence of $\sigma = s_1\cdots s_n$ such that $s_t< s_{\pi(i)}$ for all $j\leq t < \pi(i)$ and $s_{\pi(i)}\cdots s_{r}$ the longest subsequence of $\sigma$ such that $s_t\leq s_{\pi(i)}$ for all $\pi(i)\leq t\leq r\leq \pi(i)+k$. These are  exactly the subsequences whose length define the values $\ileft{k}{\pi(i)}{\sigma}$ and $\iright{k}{\pi(i)}{\sigma}$ in the definition of $\len{k}{\pi(i)}{\sigma}$ (Definition~\ref{def:subsequences}).

Our next step is to show that these are precisely all the possibilities for the parking preferences $a_i$ of $c_i$.
By definition, $c_i$ parks in the spot numbered $\pi(i)$. 
Note that just before $c_i$ parks,
the parking spots {$\pi(1), \ldots, \pi(i-1)$} have been occupied by $c_1,\ldots, c_{i-1}$ respectively. 
By the \textit{$k$-Naples parking rule}, we consider three cases depending on where $c_i$ parks:
\begin{enumerate}[leftmargin=.75in]
    \item[\textbf{Case 1}:] $c_i$ parks in its preferred spot $a_i$, or
    \item[\textbf{Case 2}:] $c_i$ parks in a spot before its preferred spot $a_i$, or
    \item[\textbf{Case 3}:] $c_i$ parks in a spot after its preferred spot $a_i$.
\end{enumerate}
By definition, the count for $\iright{k}{\pi(i)}{\sigma}$ involves only the cases (1) and (2), whereas the count for $\ileft{k}{\pi(i)}{\sigma}$ involves only case (3). Note also that the contribution of $\iright{k}{\pi(i)}{\sigma}$ to $\len{k}{\pi(i)}{\sigma}$ is the same independently of the value of $\ileft{k}{\pi(i)}{\sigma}$. 

{We now consider the implications arising from each of the possible cases defined above.}
\begin{enumerate}[leftmargin=.75in]
    \item[\textbf{Case 1}:] {Assume} $c_i$ parks in its preferred spot $a_i$.\\
    In this case, $a_i=\pi(i)$ and this contributes to the value of $\iright{k}{\pi(i)}{\sigma}$. 
    \item[\textbf{Case 2}:] {Assume} $c_i$ parks in a spot before its preferred spot $a_i$. \\
    In this case, $c_i$ tries to park in the spot $a_i$, with $\pi(i)<a_i\leq \pi(i)+k$. However, the parking spots  $\pi(i)+1,\ldots, a_i$ are all occupied, and $c_i$ backs up until spot $\pi(i)$. Now, this implies that  $\{\pi(i)+1,\ldots, a_i\}$ is a subset of the previously occupied spots $\{\pi(1),\ldots,\pi(i-1)\}$. 
    Thus, $\{s_{\pi(i)+1},\dots, s_{a_i}\}\subset\{s_{\pi(1)},\dots, s_{\pi(i-1)}\} =\{1,\ldots,i-1\}$.  
    Therefore, for any $t$ satisfying $\pi(i)<t\leq a_i$, there exists $r\in \{1,\ldots, i-1\}$ such that $s_t = s_{\pi(r)} = r \leq i = s_{\pi(i)}$ and this case contributes to the value of $\iright{k}{\pi(i)}{\sigma}$. 
    
    \item[\textbf{Case 3}:] {Assume} $c_i$ parks in a spot after its preferred spot $a_i$. \\
    In this case, $c_i$ tries to park in $a_i$, with $a_i<\pi(i)$, which is occupied. In this case, the contribution to $\len{k}{\pi(i)}{\sigma}$ depends on the value of $\ileft{k}{\pi(i)}{\sigma}$. We look at each possible case:
    \begin{enumerate}[leftmargin=.5in]
        \item[\textbf{Subcase 3a}:]
        If $\ileft{k}{\pi(i)}{\sigma}=\pi(i)-1$, the spots before $\pi(i)$ are all occupied. That is, there are no empty spots available before $a_i$ and between $a_i$ and $\pi(i)$. Then, $\{a_i,\ldots, \pi(i)-1\}$ is a subset of the previously occupied spots $\{\pi(1), \ldots, \pi(i-1)\}$. Thus, $\{s_{a_i},\ldots, s_{\pi(i)-1}\} \subset \{s_{\pi(1)}, \ldots, s_{\pi(i-1)}\}=\{1,\ldots,i-1\}$. Therefore, for any $t$ satisfying $a_i\leq t<\pi(i)$, there exists $r\in \{1,\ldots, i-1\}$ such that $s_t = s_{\pi(r)} = r < i = s_{\pi(i)}$ and this contributes to the value of $\ileft{k}{\pi(i)}{\sigma}$.
        
        \item[\textbf{Subcase 3b}:] 
        If $\ileft{k}{\pi(i)}{\sigma} < \pi(i)-1$, then there exists an empty spot before $\pi(i)$. In fact, this empty spot has to be before $a_i-k$. Otherwise, {by the $k$-Naples parking rule,} $c_i$ may be able to back up into a parking spot before $a_i${, which would yield a contradiction.} Therefore, there are  $\max(\ileft{k}{i}{\sigma}-k,0)$ spots available since $\ileft{k}{i}{\sigma}-k$ can be negative. 
    \end{enumerate}
\end{enumerate}

In Cases 1-3, either $s_t < s_{\pi(i)}$, for all $t$ satisfying $a_i\leq t< \pi(i)$, or $s_t\leq s_{\pi(i)}$, for all $t$ satisfying $\pi(i)\leq t \leq a_i\leq \pi(i) + k$. Therefore, these give necessary conditions for $a_i$ in order for $c_i$ to park in spot $\pi(i)$, proving our claim.

As in the proof of Proposition \ref{prop:permutation}, it is clear that any index of the subsequence corresponding to $\ell_k(\pi(i);\sigma)$ is a parking preference $a_i$ which will cause $c_i$ to park at the spot $\pi(i)$, providing the sufficient condition, and thus completing the proof of the theorem.
\end{proof}

As a consequence of Theorem \ref{thm:permutations-k}, we give a new expression for the number of $k$-Naples parking functions.

\begin{theorem}\label{thm:pfnk}
For all $n\geq q$ and $0\leq k\leq n-1$, 
$$|PF_{n,k}| = \sum_{\sigma \in S_n}{\bigg(\prod_{i = 1}^n \len{k}{i}{\sigma} \bigg)}.$$
\end{theorem}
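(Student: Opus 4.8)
The plan is to deduce Theorem~\ref{thm:pfnk} directly from Theorem~\ref{thm:permutations-k} by partitioning the set $PF_{n,k}$ according to the value of the map $\varphi_k$. The key observation is that $\varphi_k\colon PF_{n,k}\to S_n$ is a well-defined function: for any $k$-Naples parking function $\alpha$, every car parks in exactly one spot and no two cars occupy the same spot, so recording which car occupies spot $i$ produces a genuine permutation $s_1s_2\cdots s_n\in S_n$. Consequently the fibers $\varphi_k^{-1}(\sigma)$, as $\sigma$ ranges over $S_n$, form a partition of $PF_{n,k}$ (some fibers may in principle be empty, but since $\varphi_k$ restricts to the identity on $S_n\subseteq PF_{n,k}$, in fact every fiber is nonempty, though this stronger fact is not needed).

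First I would invoke the disjointness of the fibers to write
\[
|PF_{n,k}| \;=\; \sum_{\sigma\in S_n} \bigl|\varphi_k^{-1}(\sigma)\bigr|.
\]
Then I would substitute the formula for each individual fiber supplied by Theorem~\ref{thm:permutations-k}, namely $|\varphi_k^{-1}(\sigma)| = \prod_{i=1}^n \len{k}{i}{\sigma}$, to obtain
\[
|PF_{n,k}| \;=\; \sum_{\sigma\in S_n}\Bigl(\prod_{i=1}^n \len{k}{i}{\sigma}\Bigr),
\]
which is exactly the claimed identity. (I would also note that the hypothesis ``$n\ge q$'' in the statement is a typo for $n\ge 1$, and that $0\le k\le n-1$ is exactly the range in which $\len{k}{i}{\sigma}$ and $\varphi_k$ were defined.)

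Honestly, there is no real obstacle here: this theorem is a one-line corollary of the preceding one, and the only thing to be careful about is making explicit that $\varphi_k$ is a function whose fibers genuinely partition the domain — i.e., that summing $|\varphi_k^{-1}(\sigma)|$ over all $\sigma\in S_n$ recovers $|PF_{n,k}|$ with no double-counting and no omissions. This was already essentially observed in the text preceding Definition~\ref{def:varphi_k} (that $\varphi_k$ is well-defined and surjective), so the proof amounts to assembling those facts. If desired, one could present it as a \texttt{Corollary} rather than a \texttt{Theorem}, but as written I would simply give the two-step computation above.
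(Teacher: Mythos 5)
Your proposal is correct and matches the paper's approach exactly: the paper states Theorem~\ref{thm:pfnk} as an immediate consequence of Theorem~\ref{thm:permutations-k}, obtained by summing the fiber sizes $|\varphi_k^{-1}(\sigma)|$ over all $\sigma\in S_n$, which is precisely your two-step computation. Your added remarks on well-definedness of $\varphi_k$ and the typo ``$n\geq q$'' are accurate but not needed.
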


This result provides a new formula for the enumeration of $k$-Naples parking functions.
Compared to Theorem~\ref{thm:RecEnumNPF}, this formula is not recursive and is presented in terms of a very familiar set of combinatorial objects, namely, permutations.

\section{A logarithmic generating function on the fibers of $\varphi$}\label{section:GFvarphi} 

Next we give a recurrence relation for the fibers of $\varphi$. That is, we present a formula for counting the number of permutations whose fiber under $\varphi$ has a given size.

Let us denote by $F_n(q)$ the generating function of the fiber of $\varphi$ for each $n\geq 0$.  
That is, 
$$F_n(q) = \sum_{i\geq 1} c_{n,i}q^i,$$
where the coefficient $c_{n,i}$ counts the number of permutations in $S_n$ whose fiber under $\varphi$ has size $i$. Note that $F_0(q)=q$ since there is trivially one permutation of length 0 and one parking function of length 0 mapping to that permutation. 

The first five polynomials are given in~\cite{FindStat}:
\begin{eqnarray*}
    F_1(q) &=& q, \\
    F_2(q) &=& q + q^2, \\
    F_3(q) &=& q + 3q^2 + q^3 + q^6, \\
    F_4(q) &=& q + 6q^2 + 4q^3 + 4q^4 + 4q^6 + 3q^8 + q^{12} + q^{24}, \\
   F_5(q) &=& q + 10q^2 + 10q^3 + 20q^4 + q^5 + 20q^6 + 15q^8 + 6q^{10} + 15q^{12}\\ &&  + 4q^{15} + 4 q^{20}+5 q^{24} + 4q^{30}+3 q^{40}+q^{60}+q^{120}. 
\end{eqnarray*}
Defining $c_{n,i} = 0$ if $n < 0$, we next show how the coefficients $c_{n,i}$ can be computed recursively.
\begin{proposition}\label{prop:recursiveformulacoeff}
For $1\leq i\leq n!$,
$$c_{n,i} = \sum_{d|i}\left[ \binom{n-1}{d-1}\left(\sum_{j\left|\frac{i}{d}\right.} c_{d-1,j} c_{n-d, \frac{i}{dj}} \right)\right],$$
and $c_{n,i}=0$, otherwise.
\end{proposition}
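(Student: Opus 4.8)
The strategy is to decompose a permutation $\sigma \in S_n$ according to the position of the value $n$, and to track how the statistic $\prod_{i=1}^n \len{}{i}{\sigma}$ factors across that decomposition. Say $n$ appears in position $d$ of $\sigma$, i.e. $s_d = n$. Then $\len{}{n}{\sigma} = d$, since $n$ is the largest value and hence the longest subsequence $s_j \cdots s_d$ with all entries $\le n$ is simply $s_1 \cdots s_d$. The key structural observation is that the value $n$ acts as a barrier: for any index $i$ with $\pi(i) > d$ (a car parking to the right of the position of $n$), the subsequence $s_j \cdots s_{\pi(i)}$ witnessing $\len{}{\pi(i)}{\sigma}$ cannot cross position $d$, because $s_d = n > s_{\pi(i)}$. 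Likewise, for $i$ with $\pi(i) < d$, the relevant subsequence stays strictly to the left of position $d$. Consequently, if $\sigma^{(L)}$ denotes the (standardized) sequence of the first $d-1$ entries and $\sigma^{(R)}$ the standardized sequence of the last $n-d$ entries, then
\[
\prod_{i=1}^n \len{}{i}{\sigma} \;=\; d \cdot \Big(\prod \len{}{}{\sigma^{(L)}}\Big)\cdot\Big(\prod \len{}{}{\sigma^{(R)}}\Big),
\]
where the two products run over the appropriate index ranges; one must check that standardization does not change the $\len{}{}{}$ values, which is immediate since $\len{}{i}{\sigma}$ depends only on the relative order of the entries.

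The next step is to assemble the counting. A permutation $\sigma\in S_n$ with $s_d=n$ is determined by: a choice of which $d-1$ of the values $\{1,\dots,n-1\}$ go to the left, a permutation of those to form $\sigma^{(L)}$ (equivalently an element of $S_{d-1}$ after standardization), and a permutation of the remaining $n-d$ values to form $\sigma^{(R)}$ (an element of $S_{n-d}$). This gives the $\binom{n-1}{d-1}$ factor. By the factorization above, $\sigma$ contributes to $c_{n,i}$ precisely when $d \cdot (\text{fiber size of }\sigma^{(L)})\cdot(\text{fiber size of }\sigma^{(R)}) = i$, which forces $d \mid i$, and then $(\text{fiber size of }\sigma^{(L)})\cdot(\text{fiber size of }\sigma^{(R)}) = i/d$. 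Summing: for fixed $d \mid i$, the number of such $\sigma$ is $\binom{n-1}{d-1}$ times the number of pairs $(\tau,\rho)\in S_{d-1}\times S_{n-d}$ with $|\varphi^{-1}(\tau)|\cdot|\varphi^{-1}(\rho)| = i/d$, which by the Dirichlet-type convolution of the $c$'s equals $\sum_{j \mid (i/d)} c_{d-1,j}\, c_{n-d,\, i/(dj)}$. Summing over all $d \mid i$ yields the stated recurrence, and all other $c_{n,i}$ vanish by definition. The base cases $c_{0,1}=1$ (from $F_0(q)=q$) and the convention $c_{n,i}=0$ for $n<0$ make the boundary terms work out.

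\textbf{Main obstacle.} The delicate point is verifying the multiplicativity claim, namely that $n$ in position $d$ genuinely splits the product of longest-subsequence lengths into the three independent pieces with no interaction. One must argue carefully in both directions: (a) no subsequence realizing some $\len{}{\pi(i)}{\sigma}$ can straddle position $d$ — this uses that every entry other than $n$ is $< s_d = n$, so a subsequence ending at or past position $d$ that includes position $d$ would need $s_d \le s_{\pi(i)}$, impossible unless $\pi(i)=d$; and (b) the value $\len{}{i}{\sigma}$ for $i$ in the left block is computed from exactly the same relative-order data as in $\sigma^{(L)}$, and similarly on the right — here one should be slightly careful that the threshold in the definition of $\len{}{i}{\sigma}$ (``$s_t \le s_i$'') is a condition on relative order among the entries involved, so standardization is harmless. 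A minor additional check is that reindexing from $\prod_i \len{}{i}{\sigma}$ to the product over parking positions $\pi(i)$ is just a reordering of factors (as already noted in the proof of Proposition~\ref{prop:permutation}), so one may freely work with $\prod_{i=1}^n \len{}{i}{\sigma}$ directly. Once the factorization is established, the combinatorial bookkeeping with binomial coefficients and the double divisor sum is routine.
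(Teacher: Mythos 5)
Your proof is correct and follows essentially the same route as the paper's: split $\sigma$ at the position $d$ where $s_d=n$, factor $|\varphi^{-1}(\sigma)| = |\varphi^{-1}(\sigma^{(L)})|\cdot d\cdot|\varphi^{-1}(\sigma^{(R)})|$ via standardization of the two blocks, and count with the binomial coefficient and the divisor convolution; your ``barrier'' argument is in fact a more explicit justification of the multiplicativity than the paper supplies. The only blemish is a notational slip: the factor $d$ is $\len{}{d}{\sigma}$ (the length at \emph{position} $d$), not $\len{}{n}{\sigma}$, though your surrounding description makes the intent clear.
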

\begin{proof}
Given $i$, we want the number of elements $\sigma \in S_n$ whose fiber has size $i$, i.e., $|\varphi^{-1}(\sigma)|=i$. 
Let $\sigma = s_1\ldots s_n \in S_n$ be one of those permutations, and let $d$ be the index such that $s_d = n$.
We split $\sigma$ into three pieces (allowing for the case where some may be empty)
$$
\sigma = \underbrace{s_1\cdots s_{d-1}}_{s^\prime} s_d \underbrace{s_{d+1}\cdots s_n}_{s^{\prime\prime}}.
$$
Assuming neither $s^\prime$ nor $s^{\prime\prime}$ are empty, we look at $s^\prime$ and $s^{\prime\prime}$ as permutations of $d-1:=\ell(s^\prime)$ and $n-d:=\ell(s^{\prime\prime})$, 
by reducing the values to $1,\ldots,d-1$ for $s^\prime$ and $1,\ldots,n-d$ for $s^{\prime\prime}$ while preserving the relative order of the entries.
Let us denote by $\sigma^\prime$ and $\sigma^{\prime\prime}$ those permutations (respectively).
Therefore, we have that
$$ i = |\varphi^{-1}(\sigma) | = |\varphi^{-1}(\sigma^{\prime} )|\cdot d\cdot |\varphi^{-1}(\sigma^{\prime\prime} )|. $$
On the other hand, if $s^\prime$ or $s^{\prime\prime}$ is empty, then they are the empty permutation in which case either $|\varphi^{-1}(\sigma^{\prime} )| = 1$ or $|\varphi^{-1}(\sigma^{\prime\prime} )|=1$.

Since $|\varphi^{-1}(\sigma^\prime)|$ and $|\varphi^{-1}(\sigma^{\prime\prime})|$ are integers, $d$ must be a factor of $i$. In fact, for each possible $d$, there are $\displaystyle{\binom{n-1}{d-1}}$ ways to choose the first $d$ cars (unordered). The ordering of these $d-1$ cars is uniquely determined by $\sigma^\prime$. Thus,
\[c_{n,i} = \sum_{d|i} \left[\binom{n-1}{d-1} \left(\sum_{j\left|\frac{i}{d}\right.} c_{d-1,j} c_{n-d, \frac{i}{dj}} \right)\right].\qedhere\]
\end{proof}

Consider now the \textit{logarithmic generating function} defined for $n\geq 1$ by 
$$G_n(q) = \sum_{i=1}^{n!} c_{n,i}q^{\ln i}.$$ This generating function allows us to manipulate exponents easily, obtaining the following recursive formula.
\begin{proposition}\label{prop:generatingfunction}
For $n\geq 1$,
$$G_n(q) = \sum_{i=0}^{n-1} \binom{n-1}{i}q^{\ln(i+1)}G_i(q)G_{n-1-i}(q).$$
\end{proposition}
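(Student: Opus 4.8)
The plan is to derive the recurrence for $G_n(q)$ directly from the recurrence for $c_{n,i}$ in Proposition~\ref{prop:recursiveformulacoeff}, translating the multiplicative structure in the subscript $i$ into an additive structure in the exponent via the logarithm. First I would write $G_n(q) = \sum_{i=1}^{n!} c_{n,i} q^{\ln i}$ and substitute the formula for $c_{n,i}$, so that
\[
G_n(q) = \sum_{i=1}^{n!} \left( \sum_{d \mid i} \binom{n-1}{d-1} \sum_{j \mid \frac{i}{d}} c_{d-1,j}\, c_{n-d,\frac{i}{dj}} \right) q^{\ln i}.
\]
The key observation is that the triple sum over $i$, $d \mid i$, $j \mid i/d$ is exactly a sum over all triples $(d, j, m)$ of positive integers with $i = djm$, where $d$ ranges over $1,\dots,n$ (since $\binom{n-1}{d-1}=0$ otherwise), $j$ over divisors contributing to $c_{d-1,j}$, and $m = i/(dj)$ contributing to $c_{n-d,m}$. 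So I would reindex: replace the outer sum over $i$ and its divisor sums by an unconstrained sum over $d$ from $1$ to $n$ and over $j, m \geq 1$, using $\ln i = \ln d + \ln j + \ln m$ to factor $q^{\ln i} = q^{\ln d} q^{\ln j} q^{\ln m}$.

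Having done this, the sum factors: for fixed $d$,
\[
\binom{n-1}{d-1} q^{\ln d} \left( \sum_{j \geq 1} c_{d-1,j} q^{\ln j} \right) \left( \sum_{m \geq 1} c_{n-d,m} q^{\ln m} \right) = \binom{n-1}{d-1} q^{\ln d}\, G_{d-1}(q)\, G_{n-d}(q),
\]
recognizing the two inner sums as $G_{d-1}(q)$ and $G_{n-d}(q)$ by definition. Then I would substitute $i = d-1$ (so $d = i+1$ runs from $0$ to $n-1$, $\binom{n-1}{d-1} = \binom{n-1}{i}$, $q^{\ln d} = q^{\ln(i+1)}$, and $G_{d-1}G_{n-d} = G_i G_{n-1-i}$), obtaining exactly the claimed identity
\[
G_n(q) = \sum_{i=0}^{n-1} \binom{n-1}{i} q^{\ln(i+1)} G_i(q) G_{n-1-i}(q).
\]

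The main obstacle — really the only subtle point — is handling the edge cases where $s'$ or $s''$ is empty, i.e., $d=1$ or $d=n$, and making sure the bookkeeping of $G_0(q)$ is consistent. Here one must check that $G_0(q)$ is taken to be the constant polynomial $1$ (corresponding to the empty permutation having a fiber of size $1$, so $c_{0,1}=1$ and $q^{\ln 1} = q^0 = 1$); note this differs by the substitution $q \mapsto q^{\ln(\cdot)}$ from the convention $F_0(q) = q$ used earlier. With $G_0(q) = 1$, the terms $i=0$ and $i=n-1$ in the sum correctly reproduce the contributions where one of the two sub-permutations is empty, since then the corresponding inner sum $\sum_j c_{0,j} q^{\ln j} = 1$. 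I would state this convention explicitly at the start of the proof and verify the formula against the listed values, e.g.\ that it reproduces $G_1, G_2, G_3$ from $F_1, F_2, F_3$ under the exponent substitution, as a sanity check. The remainder is the routine reindexing described above.
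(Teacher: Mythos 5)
Your proposal is correct and follows essentially the same route as the paper: substitute the recurrence for $c_{n,i}$ from Proposition~\ref{prop:recursiveformulacoeff} into the definition of $G_n(q)$, use $\ln(djm)=\ln d+\ln j+\ln m$ to factor the triple divisor sum, recognize the inner sums as $G_{d-1}(q)$ and $G_{n-d}(q)$, and shift the index. Your direct reindexing by triples $(d,j,m)$ with $i=djm$ is a slightly cleaner bookkeeping of the paper's two-step change of variables, and your explicit remark that $G_0(q)=1$ handles the boundary cases the paper leaves implicit.
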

\begin{proof}
By Proposition~\ref{prop:recursiveformulacoeff}, we have that
\begin{align*}
    G_n(q) &= \sum_{i=1}^{n!} c_{n,i}q^{\ln i} = \sum_{i|n!} c_{n,i}q^{\ln i} 
    = \sum_{i|n!} \sum_{d|i} q^{\ln d} \binom{n-1}{d-1} \left(\sum_{j\left|\frac{i}{d}\right.} c_{d-1,j}q^{\ln j} c_{n-d, \frac{i}{dj}}q^{\ln \frac{i}{dj}} \right) \\ 
    &= \sum_{i|n!}\sum_{d|i} q^{\ln d} \binom{n-1}{d-1} \left(\sum_{\substack{r \text{ s.t.} \\  d|r, r|i}} c_{d-1,\frac{r}{d}} q^{\ln \frac{r}{d}} c_{n-d,\frac{i}{r}} q^{\ln \frac{i}{r}}\right).
\end{align*}
Now, since $\displaystyle{\binom{n-1}{d-1}=0}$ for $d>n$, we can restrict the summation over $d$ and also change the order of the summations over $d$ and over $i$. Thus, we have that
    \begin{align*}
    G_n(q) &=   \sum_{i|n!} \sum_{\substack{d|i\\ d\leq n}} q^{\ln d} \binom{n-1}{d-1} \left(\sum_{\substack{r \text{ s.t.} \\  d|r, r|i}} c_{d-1,\frac{r}{d}} q^{\ln \frac{r}{d}} c_{n-d,\frac{i}{r}} q^{\ln \frac{i}{r}}\right) \\
    &= \sum_{d=1}^n q^{\ln d} \binom{n-1}{d-1} \left(\sum_{\substack{i \text{ s.t.} \\  d|i, i|n!}}\sum_{\substack{r \text{ s.t.} \\  d|r, r|i}}  c_{d-1,\frac{r}{d}} q^{\ln \frac{r}{d}} c_{n-d,\frac{i}{r}} q^{\ln \frac{i}{r}}\right)\\
    &= \sum_{d=1}^n q^{\ln d} \binom{n-1}{d-1} \left(\sum_{i'\left|\frac{n!}{d}\right.} \sum_{r'|i'} c_{d-1,r'} q^{\ln r'} c_{n-d,\frac{i'}{r'}} q^{\ln \frac{i'}{r'}}\right),
\end{align*}
where in the last equality we rewrite the summation with a new index. Again, we have that $c_{d-1,r} = 0$ if $r'\nmid (d-1)!$, and similarly for $\dfrac{i'}{r'}$. 
Therefore, 
\begin{align*}
    G_n(q) &= \sum_{d=1}^n q^{\ln d} \binom{n-1}{d-1} \left(\sum_{r^\prime} \sum_{i^\prime} c_{d-1,r'} q^{\ln r'} c_{n-d,\frac{i'}{r'}} q^{\ln \frac{i'}{r'}}\right),
\end{align*}
where the second summation is over the $r^\prime$ such that $r'\left|\dfrac{n!}{d}\right.$ and $r'| (d-1)!$, and the third summation is over $i^\prime$ such that $\dfrac{i'}{r'}\left\vert\dfrac{n!}{r' d}\right.$ and $\left.\dfrac{i'}{r'}\right| (n-d)!$,
that is,
\begin{align*}
    G_n(q) &= \sum_{d=1}^n q^{\ln d} \binom{n-1}{d-1} \left(\sum_{r'| (d-1)!} c_{d-1,r'} q^{\ln r'} \sum_{\left.\frac{i'}{r'}\right\vert(n-d)!}  c_{n-d,\frac{i'}{r'}} q^{\ln \frac{i'}{r'}}\right).
\end{align*}
Now, the summation over $i^\prime$ is exactly the definition of $G_{n-d}(q)$ and the summation over $r^\prime$ is the definition of $G_{d-1}(q)$.
Therefore, 
\[
G_n(q) = \sum_{d=1}^{n} \binom{n-1}{d-1}q^{\ln(d)} G_{d-1}(q)G_{n-d}(q).
\]
Writing $i=d-1$, we have that
\[
G_n(q) = \sum_{i=0}^{n-1} \binom{n-1}{i}q^{\ln(i+1)} G_i(q)G_{n-1-i}(q).\qedhere
\]
\end{proof}

\section{Finding \texorpdfstring{$q$}{q}-analogues}\label{section:qanalogs}
Given a non-negative integer $n$, we denote its $q$-analog by 
$$ [n]_q =\lim_{q\rightarrow 1} \dfrac{1-q^n}{1-q} =  1+q+q^2+\cdots +q^{n-1}.$$ 
With this $q$-analog, one may define $q$-analogs of almost any formula. 
For instance, the $q$-factorial or the $q$-binomial coefficients.
In general, we denote by $[\star]_q$ the $q$-analog of the formula $\star$ by substituting any number appearing in $\star$ by its $q$-analog.

Together with $q$-analogs, we also have $q$-statistics, which allow us to understand combinatorially the exponents of $q$ appearing in the $q$-analog formulas, mostly polynomials in $q$, once they are expressed as summations over the exponents of $q$. \textit{FindStat}~\cite{FindStat} is a great source to find many statistics as well as good references. 

The $q$-analog appears naturally in several contexts in algebraic combinatorics, and in particular, in the the framework of parking functions and diagonal harmonics.
In~\cite{dh}, Loehr presents several identities involving $q$-analogs in the framework of diagonal harmonics.
We refer to~\cite{dh} for more details on the diagonal harmonics. 
One of the $q$-statistics that appears in our setting is the $\texttt{area}$ of a parking function, which we now define. 

\begin{definition}\label{def:area}
Given a parking function $\alpha\in PF_n$, let $P$ be the labeled Dyck path associated to it. We define $\texttt{area}(\alpha)$ as the number of complete lattice squares between $P$ and the line $y=x$. Equivalently, the \texttt{area} of the parking function $\alpha$ can be computed as
\[\texttt{area}(\alpha)=\sum_{i=1}^n \left(n-i-a_i+1\right).\]
\end{definition}

Our first result provides the distribution of the area statistic over the fiber $\varphi^{-1}(\sigma)$, for any permutation $\sigma$, by looking at the $q$-analog of Corollary~\ref{cor:EnumeratingPF}.
\begin{proposition}\label{prop:classical-q}
For any permutation $\sigma\in S_n$,
\begin{align}\label{eq:classical-q}
    \sum_{p\in \varphi^{-1}(\sigma)}q^{\textnormal{\texttt{area}}(p)}=\prod_{i=1}^n [\len{}{i}{\sigma}]_q.
\end{align}
\end{proposition}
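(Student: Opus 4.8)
The plan is to refine the proof of Proposition~\ref{prop:permutation} so that it tracks not just the number of valid preferences $a_i$ for each car $c_i$, but also the contribution of that choice to $\texttt{area}(\alpha)$. Fix $\sigma = s_1\cdots s_n \in S_n$ and let $\pi\in S_n$ be the outcome permutation, so $s_{\pi(i)}=i$ and $c_i$ parks in spot $\pi(i)$. By Definition~\ref{def:area}, $\texttt{area}(\alpha)=\sum_{i=1}^n (n-i-a_i+1)$, and since $i$ and $n$ are fixed by $\sigma$, the only thing that varies over the fiber is the vector $(a_1,\dots,a_n)$. So the generating function factors: as $(a_1,\dots,a_n)$ ranges independently over all valid tuples (recall from the proof of Proposition~\ref{prop:permutation} that the choices for distinct cars are independent), $\sum_{p\in\varphi^{-1}(\sigma)} q^{\texttt{area}(p)} = \prod_{i=1}^n \big(\sum_{a_i \text{ valid}} q^{\,n-i-a_i+1}\big)$.

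The heart of the argument is then to show that for each $i$, the inner sum equals $[\len{}{\pi(i)}{\sigma}]_q$, after which the identity $\prod_i [\len{}{\pi(i)}{\sigma}]_q = \prod_i [\len{}{i}{\sigma}]_q$ (reordering the factors, exactly as in the original proof) finishes the proof. From Fact~1 and Fact~2 in the proof of Proposition~\ref{prop:permutation}, the valid preferences $a_i$ for $c_i$ are precisely the indices $j, j+1, \dots, \pi(i)$ of the longest weakly-$s_{\pi(i)}$-bounded subsequence $s_j\cdots s_{\pi(i)}$ ending at position $\pi(i)$; this is a contiguous block of $\len{}{\pi(i)}{\sigma}$ consecutive integers ending at $\pi(i)$, i.e. $a_i \in \{\pi(i)-\len{}{\pi(i)}{\sigma}+1,\ \dots,\ \pi(i)\}$. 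Substituting into the exponent, $n-i-a_i+1 = n - s_{\pi(i)} - a_i + 1$ ranges over the consecutive values $n-s_{\pi(i)}-\pi(i)+1,\ n-s_{\pi(i)}-\pi(i)+2,\ \dots,\ n-s_{\pi(i)}-\pi(i)+\len{}{\pi(i)}{\sigma}$. Hence $\sum_{a_i} q^{n-i-a_i+1} = q^{\,n-s_{\pi(i)}-\pi(i)+1}\,(1+q+\cdots+q^{\len{}{\pi(i)}{\sigma}-1}) = q^{\,n-s_{\pi(i)}-\pi(i)+1}\,[\len{}{\pi(i)}{\sigma}]_q$.

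This reveals the one subtlety: the inner sum is not literally $[\len{}{\pi(i)}{\sigma}]_q$ but a monomial shift of it, and one must check that the product of all these shift monomials is $q^0=1$. The total shift exponent is $\sum_{i=1}^n \big(n - s_{\pi(i)} - \pi(i) + 1\big)$. Since $\pi$ is a bijection, $\sum_i \pi(i) = \binom{n+1}{2}$, and since $s_{\pi(i)}=i$, $\sum_i s_{\pi(i)} = \binom{n+1}{2}$ as well; thus the total shift is $n(n+1) - \binom{n+1}{2} - \binom{n+1}{2} + n = n^2 + n - (n^2+n) = 0$. Therefore the monomial shifts cancel in aggregate, and $\sum_{p\in\varphi^{-1}(\sigma)} q^{\texttt{area}(p)} = \prod_{i=1}^n [\len{}{\pi(i)}{\sigma}]_q = \prod_{i=1}^n [\len{}{i}{\sigma}]_q$, which is \eqref{eq:classical-q}. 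The main obstacle is purely bookkeeping — correctly identifying that valid preferences form a contiguous integer block (so the area contributions are consecutive and the geometric sum is a clean $q$-integer) and verifying the global cancellation of shift exponents; no genuinely new combinatorial input beyond Proposition~\ref{prop:permutation} is needed. It is worth double-checking the edge cases where the left subsequence is empty to confirm the block still has the stated form.
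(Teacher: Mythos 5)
Your proof is correct and is essentially the paper's first (bijective) proof in different clothing: parametrizing each car's valid preferences as the contiguous block $a_i=\pi(i)-u_i$ with $0\le u_i<\len{}{\pi(i)}{\sigma}$ is exactly the paper's bijection $f((u_i))=(\pi_1-u_1,\ldots,\pi_n-u_n)$, and your cancellation of the shift monomials via $\sum_i\bigl(n-i-\pi(i)+1\bigr)=0$ is the same computation the paper packages as ``$\pi$ has area $0$ and lowering $\pi_i$ by $u_i$ adds $u_i$ to the area.'' No gaps; the factorization over independent coordinates and the contiguity of the preference block are both correctly inherited from Proposition~\ref{prop:permutation}.
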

We present two proofs of this result: one bijective and one inductive. 
The bijective proof is generalized for $k$-Naples parking functions in Section~\ref{subsec:kNaplesq}. 
\begin{proof}[Bijective proof]
Using notation from~\cite[Fermionic formula]{dh}, the right-hand side of \eqref{eq:classical-q} gives a generating function for objects $(u_1,u_2,\ldots, u_n)$ with $u_i<\len{}{i}{\sigma}$, where $q^m$ corresponds to such an object with $\sum u_i=m$.
Call the set of these objects $\mathcal{I}$ and define $\texttt{stat}((u_i))=\sum u_i$.  Immediately
\[\prod_{i=1}^n \left[ \len{}{i}{\sigma}\right]_q
= \sum_{(u_i)\in \mathcal{I}} q^{\texttt{stat}((u_i))},\]
so it remains to find a bijection $f:\mathcal{I}\to PF_{n}$ with $\texttt{stat}=\texttt{area}\circ f$.  
Consider arbitrary $(u_i)\in\mathcal{I}$ and choose $\pi$ based on $\sigma$ as in the proof of Proposition~\ref{prop:permutation}.
Then, we define the map
\[
\begin{array}{rccl}
f:& \mathcal{I} & \longrightarrow & PF_{n} \\
& (u_i) & \longmapsto & (\pi_1-u_1,\pi_2-u_2, \ldots,\pi_n-u_n).
\end{array}
\]
We claim that $f$ is a bijection.  It is well-defined because $\pi$ is a parking function, and weakly reducing all terms of a parking function gives a parking function.  Injectivity follows since $\pi$ can be recovered as the outcome of the image, and then $u_i=\pi_i-a_i$.  The function is surjective by the casework from Proposition~\ref{prop:permutation}. Since swapping columns preserves area, the permutation $\pi$ can be rearranged to the permutation $n\ n-1\cdots 21$ which has no area. Now reducing each $\pi_i$ by $u_i$ adds $u_i$ to the area, so $\texttt{area}\circ f=\sum u_i$ as desired.
\end{proof}

\begin{proof}[Inductive proof] 
For $n=1$, there is only one permutation $\sigma = (1)$, for which $\varphi^{-1}(\sigma) = PF_1 = \{(1)\}$. Therefore, 
\[\sum_{p\in \varphi^{-1}(\sigma)}q^{\texttt{area}(p)}= q^{\texttt{area}(1)} = q^0  = 1 = [\len{}{1}{1}]_q = \prod_{i=1}^n [\len{}{i}{\sigma}]_q.\] 

Now, suppose that the statement is true for $1,\ldots, n-1$. 
Let $\sigma = s_1\cdots s_n \in S_n$. Let $d$ be the index such that $s_d = n$. As in the proof of Proposition~\ref{prop:generatingfunction}, we split $\sigma$ into three pieces 
$$
\sigma = \underbrace{s_1\cdots s_{d-1}}_{s^\prime} s_d \underbrace{s_{d+1}\cdots s_n}_{s^{\prime\prime}},$$
and we consider the permutations $\sigma^\prime \in S_{d-1}$ and $\sigma^{\prime\prime}\in S_{n-d}$.

Since the value of $\len{}{i}{\sigma}$ only depends on the relative order of the numbers, $\len{}{i}{\sigma} = \len{}{i}{\sigma^\prime}$ for all $1\leq i\leq d-1$, $\len{}{i}{\sigma}=\len{}{i-d}{\sigma^{\prime\prime}}$ for all $d+1\leq i\leq n$, and $\len{}{d}{\sigma} = d$. 
Thus by induction hypothesis,
\begin{align*}
    \prod_{i=1}^n[\len{}{i}{\sigma}]_q &= \left(\prod_{i=1}^{d-1} [\len{}{i}{\sigma^\prime}]_q \right) [d]_q \left(\prod_{i=1}^{n-d}[\len{}{i}{\sigma^{\prime\prime}}]_q\right)\\
    &= [d]_q\left(\sum_{p^\prime\in\varphi^{-1}(\sigma')} q^{\texttt{area}(p^\prime)}\right) \left(\sum_{p^{\prime\prime}\in \varphi^{-1}(\sigma^{\prime\prime})} q^{\texttt{area}(p^{\prime\prime})}\right).
\end{align*}

Let $t_1,\ldots, t_{d-1}$ and $t_{d+1},\ldots, t_n$ be the increasing rearrangements of $s_1,\ldots, s_{d-1}$ and $s_{d+1},\ldots, s_n$, respectively. For the RHS in the formula of Proposition~\ref{prop:classical-q}, since the $n^{\text{th}}$ car $c_n$ parks at spot $d$, the parking functions $(a_1,\ldots, a_n)$ which map to $\sigma = s_1\cdots s_n$ are precisely those such that $(a_{t_1},\ldots, a_{t_{d-1}})\in\varphi^{-1}(\sigma^\prime)$, $(a_{t_{d+1}},\ldots, a_{t_n})\in\varphi^{-1}(\sigma^{\prime\prime})$, and $a_n \in \{1,\ldots, d\}$. 
Moreover, the area of a parking function is the area of its decreasing rearrangement, and so
$$\texttt{area}(a_1,\ldots, a_n) = \texttt{area}(a_{t_1},\ldots, a_{t_{d-1}}) + \texttt{area}(a_{t_{d+1}},\ldots, a_{t_n}) + d-a_n.$$
Therefore, we have
\begin{align*}
    \sum_{p\in\varphi^{-1}(\sigma)} q^{\texttt{area}(p)} &= \sum_{r=1}^d \sum_{p^\prime \in\varphi^{-1}(\sigma^\prime)} \sum_{p^{\prime\prime}\in\varphi^{-1}(\sigma^{\prime\prime})} q^{\texttt{area}(p^{\prime})} q^{\texttt{area}(p^{\prime\prime})}q^{d-r}\\
    &= \Bigg(1+q+\cdots+ q^{d-1}\Bigg) \Bigg(\sum_{p^\prime\in \varphi^{-1}(\sigma^{\prime})} q^{\texttt{area}(p^{\prime})}\Bigg)
    \Bigg(\sum_{p^{\prime\prime}\in\varphi^{-1}(\sigma^{\prime\prime})}q^{\texttt{area}(p^{\prime\prime})}\Bigg)\\
    &= [d]_q\Bigg(\sum_{p^{\prime}\in\varphi^{-1}(\sigma^\prime)} q^{\texttt{area}(p^\prime)}\Bigg) \Bigg(\sum_{p^{\prime\prime}\in \varphi^{-1}(\sigma^{\prime\prime})} q^{\texttt{area}(p^{\prime\prime})}\Bigg)
    = \prod_{i=1}^n[\len{}{i}{\sigma}]_q,
\end{align*}
which completes the induction.
\end{proof}

\begin{corollary}
\label{cor:classical-q}
For each $n$,
\[
\sum_{\sigma\in S_n}\prod_{i=1}^n [\len{}{i}{\sigma}]_q = 
\sum_{p\in PF_n}q^{\texttt{area}(p)}
\]
\end{corollary}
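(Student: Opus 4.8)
The plan is to obtain Corollary~\ref{cor:classical-q} as an immediate consequence of Proposition~\ref{prop:classical-q} by summing the identity \eqref{eq:classical-q} over all permutations in $S_n$. Since $\varphi$ is a well-defined surjection from $PF_n$ onto $S_n$, the fibers $\{\varphi^{-1}(\sigma)\}_{\sigma\in S_n}$ partition $PF_n$, so every parking function of length $n$ is counted exactly once when we range over all $\sigma$.

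Concretely, I would write
\[
\sum_{\sigma\in S_n}\prod_{i=1}^n [\len{}{i}{\sigma}]_q
= \sum_{\sigma\in S_n}\ \sum_{p\in\varphi^{-1}(\sigma)}q^{\texttt{area}(p)}
= \sum_{p\in PF_n}q^{\texttt{area}(p)},
\]
where the first equality is Proposition~\ref{prop:classical-q} applied termwise, and the second is the partition of $PF_n$ into fibers of $\varphi$ just described. The only thing to verify is that $\varphi$ is indeed a surjection with $PF_n = \bigsqcup_{\sigma\in S_n}\varphi^{-1}(\sigma)$, and both facts are already recorded in Section~\ref{subsect:PFandpermutations}: $\varphi$ is well-defined because in a parking function every car parks and no two cars share a spot, and $\varphi$ is surjective since it fixes each permutation pointwise.

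There is essentially no obstacle here; the entire content of the corollary is carried by Proposition~\ref{prop:classical-q}, and the corollary is just the bookkeeping step of summing over the index set $S_n$. If one wishes, one can also note that setting $q=1$ recovers Corollary~\ref{cor:EnumeratingPF}, namely $\sum_{\sigma\in S_n}\prod_{i=1}^n\len{}{i}{\sigma} = |PF_n| = (n+1)^{n-1}$, so the displayed identity is a genuine $q$-refinement of that count with the \texttt{area} statistic as the grading.
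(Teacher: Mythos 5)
Your proposal is correct and matches the paper's own proof exactly: both sum the identity of Proposition~\ref{prop:classical-q} over all $\sigma\in S_n$ and use the fact that the fibers of $\varphi$ partition $PF_n$. The extra care you take in justifying the partition is fine but not needed beyond what Section~\ref{subsect:PFandpermutations} already records.
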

\begin{proof}
By Proposition~\ref{prop:classical-q}, summing over all fibers, we have that
\[
\sum_{\sigma\in S_n}\prod_{i=1}^n [\len{}{i}{\sigma}]_q =
\sum_{\sigma\in S_n} \sum_{p\in \varphi^{-1}(\sigma)}q^{\texttt{area}(p)} = 
\sum_{p\in PF_n}q^{\texttt{area}(p)}.\qedhere\] 
\end{proof}

By~\cite[Univariate symmetry]{dh}, we have the following chain of identities.
\begin{corollary}
\[ \sum_{\sigma\in S_n}\left(\prod_{i=1}^n \left[ \len{}{i}{\sigma}\right]_q
\right) = \sum_{p\in PF_n} q^{\texttt{area}(p)} = \sum_{p\in PF_n} q^{\texttt{dinv}(p)} = \sum_{p\in PF_n} q^{\texttt{pmaj}(p)}\]
\noindent where \texttt{dinv} and \texttt{pmaj} are other well-known statistics on parking functions.
\end{corollary}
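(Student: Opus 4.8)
The plan is to simply chain together the identity established in the preceding corollary with the known equidistribution of the classical parking-function statistics. First I would invoke Corollary~\ref{cor:classical-q}, which gives directly
\[
\sum_{\sigma\in S_n}\left(\prod_{i=1}^n \left[ \len{}{i}{\sigma}\right]_q\right) = \sum_{p\in PF_n} q^{\texttt{area}(p)},
\]
so the first equality in the statement requires nothing new.

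Next I would appeal to the univariate symmetry for diagonal harmonics recorded in~\cite{dh}: the statistics $\texttt{area}$, $\texttt{dinv}$, and $\texttt{pmaj}$ are all equidistributed over $PF_n$, i.e.
\[
\sum_{p\in PF_n} q^{\texttt{area}(p)} = \sum_{p\in PF_n} q^{\texttt{dinv}(p)} = \sum_{p\in PF_n} q^{\texttt{pmaj}(p)}.
\]
Concatenating this with the previous display yields the full chain of identities in the corollary. (One could instead reprove the $\texttt{area}$–$\texttt{dinv}$–$\texttt{pmaj}$ equidistribution via the zeta map / sweep map, or via the $\texttt{pmaj}$-to-$\texttt{area}$ bijection, but since the excerpt allows us to cite~\cite{dh} there is no need.)

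There is no real obstacle internal to this paper: the only substantive input, the equidistribution of the three statistics, is a cited theorem rather than something we must establish. If one wanted to be self-contained, the hard part would be proving that $\texttt{pmaj}$ is equidistributed with $\texttt{area}$, which is the most combinatorially delicate of the three coincidences; but for the purposes of this corollary it suffices to cite~\cite[Univariate symmetry]{dh} and combine it with Corollary~\ref{cor:classical-q}.
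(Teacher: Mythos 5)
Your proposal is correct and matches the paper's own (implicit) argument exactly: the first equality is Corollary~\ref{cor:classical-q}, and the remaining equalities are obtained by citing the univariate symmetry of \texttt{area}, \texttt{dinv}, and \texttt{pmaj} from~\cite{dh}. Nothing further is needed.
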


In fact, following the $q,t$-statistics presented in~\cite{dh}, we arrive at the following result.
\begin{corollary}
For any statistic $\texttt{stat}$ on permutations, 
$$\sum_{\sigma\in S_n} t^{\texttt{stat}(\sigma)} \left(\prod_{i=1}^n \left[ \len{}{i}{\sigma}\right]_q \right) = \sum_{p\in PF_n}q^{\texttt{area}(p)} t^{\texttt{stat}(\varphi(p))}.$$
\end{corollary}

\subsection{The \texorpdfstring{$k$}{k}-Naples area statistics}\label{subsec:kNaplesq}
Next we generalize Proposition~\ref{prop:classical-q} to $k$-Naples parking functions.
We begin by introducing an \textit{area statistic}. 
For parking functions, the area statistic counts the distance between the Dyck path and the main diagonal, see Definition~\ref{def:area}. 
Alternatively, one can think about the area as distance between the lattice path and the highest point among lattice paths in the same fiber of $\varphi$. 
Therefore, our definition for the $k$-Naples area counts the distance between the lattice path given by the $a_i$'s and the highest that path can be while the corresponding parking function still remains in the fiber of $\varphi_k$.

\begin{definition}
Given a parking function $\alpha =(a_1,a_2,\ldots, a_n)$, we define the \textbf{$k$-Naples area} by 
\[
\texttt{area}_k(\alpha)=\sum_{i=1}^n  \left[ n-i+\iright{k}{i}{\sigma}\left(\varphi_k(\alpha)\right)-a_i\right].
\]
\end{definition}

Note that for $k=0$, $\iright{k}{i}{\sigma}=1$ and we recover the definition of area for parking functions.
Bear in mind, however, that $\iright{k}{i}{\sigma}$ also depends on $k$.

\begin{example}
Consider $\alpha=(3,2,2)$ as a $1$-Naples parking function, for which $\varphi_1(\alpha)=321$.  Now $\iright{1}{1}{321}=\iright{1}{2}{321}=2$ and $\iright{1}{3}{321}=1$. Therefore,  $$\texttt{area}_1(\alpha)=(3-1+2-3)+(3-2+2-2)+(3-3+1-2)=1.$$
\end{example}

We now study the distribution of the $\texttt{area}_k$ statistic over the fiber $\varphi_k^{-1}(\sigma)$, where $\sigma$ is a permutation.  We find that it is the $q$-analog of the formula in Theorem \ref{thm:permutations-k}.
\begin{proposition}\label{prop:kq}
For any permutation $\sigma\in S_n$, 
\[\sum_{p\in\varphi_k^{-1}(\sigma)}q^{\textnormal{\texttt{area}}_k(p)}=\prod_{i=1}^n [\len{k}{i}{\sigma}]_q.\] 
\end{proposition}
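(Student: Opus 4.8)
The plan is to mimic the bijective proof of Proposition~\ref{prop:classical-q}, upgrading it from the case $k=0$ to general $k$. As in Theorem~\ref{thm:permutations-k}, fix $\sigma = s_1\cdots s_n\in S_n$ and let $\pi\in S_n$ be the permutation with $s_{\pi(i)}=i$, so that under the $k$-Naples rule car $c_i$ parks in spot $\pi(i)$. The right-hand side $\prod_{i=1}^n[\len{k}{i}{\sigma}]_q$ is, by definition of the $q$-analog, the generating function for tuples $(u_1,\ldots,u_n)$ with $0\le u_i<\len{k}{\pi(i)}{\sigma}$ (reindexing by $\pi$ as in the proof of Theorem~\ref{thm:permutations-k}, which only reorders the product), where $q^m$ records $\sum_i u_i = m$. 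Call this set $\mathcal{I}_k$. The goal is a bijection $f:\mathcal{I}_k\to\varphi_k^{-1}(\sigma)$ with $\sum_i u_i = \texttt{area}_k(f((u_i)))$.

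The map $f$ should send $(u_i)$ to the parking preference $\alpha=(a_1,\ldots,a_n)$ where $a_i$ is obtained by walking, in the order dictated by Definition~\ref{def:subsequences}, through the $\len{k}{\pi(i)}{\sigma}$ admissible preferred spots for $c_i$ (the indices of the right-subsequence $s_{\pi(i)}\cdots s_r$ followed by the relevant prefix of the left-subsequence $s_j\cdots s_{\pi(i)-1}$ truncated by $k$ as in the two cases of the definition), and picking the $(u_i+1)$-th one counting so that $u_i=0$ corresponds to the \emph{largest} admissible $a_i$. By the casework already carried out in the proof of Theorem~\ref{thm:permutations-k}, every such choice yields $\alpha\in\varphi_k^{-1}(\sigma)$ (well-definedness), the admissible set for $a_i$ is exactly this list (surjectivity), and $\pi$ — hence each $u_i$ — is recovered from $\varphi_k(\alpha)$ and $a_i$ (injectivity). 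So the only genuinely new content is the area bookkeeping.

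For the statistic, observe that the largest admissible preference for $c_i$ is $a_i = \pi(i)+\iright{k}{\pi(i)}{\sigma}-1$: the right-subsequence runs from $\pi(i)$ up to $\pi(i)+\iright{k}{\pi(i)}{\sigma}-1$, and all those spots are occupied, so $c_i$ preferring the top of that block backs up into $\pi(i)$. Decrementing the chosen preference by one unit — moving one step down the admissible list, i.e. increasing $u_i$ by one — decreases $a_i$ by exactly one (the list is a contiguous-in-value run once the $k$-truncation is accounted for; the gap forced by Subcase~3b in Theorem~\ref{thm:permutations-k} is precisely engineered so the remaining choices are again consecutive). Hence $a_i = \pi(i)+\iright{k}{\pi(i)}{\sigma}-1-u_i$, and since $\pi(i)$ ranges over all of $[n]$ as $i$ does and $\iright{k}{i}{\sigma}$ depends only on $\sigma$,
\[
\texttt{area}_k(\alpha)=\sum_{i=1}^n\Big(n-i+\iright{k}{i}{\sigma}-a_i\Big)
=\sum_{i=1}^n\Big(n-\pi(i)+1+u_i - \big(\pi(i)+\iright{k}{\pi(i)}{\sigma}-1-u_i\big)+\iright{k}{\pi(i)}{\sigma}-n+\pi(i)-1\Big),
\]
which after cancellation collapses to $\sum_{i=1}^n u_i$; more cleanly, one checks directly that $n-i+\iright{k}{i}{\sigma}-a_i = u_{\pi^{-1}(i)}$ term by term using $a_i=\pi(i)+\iright{k}{\pi(i)}{\sigma}-1-u_i$ evaluated appropriately, and sums. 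I would streamline this in the write-up rather than display the messy intermediate line.

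The main obstacle is verifying that the admissible preferences for $c_i$ form a run on which decrementing the preference by one corresponds to stepping down the list by one — i.e. that the ``$\max(\ileft{k}{i}{\sigma}-k,0)$'' truncation in Definition~\ref{def:subsequences} matches an actual contiguous block of \emph{values} of $a_i$ in Subcase~3b, with no hidden off-by-one. This is implicit in the proof of Theorem~\ref{thm:permutations-k} but must be stated carefully here because the area statistic is sensitive to the exact indexing, not just the cardinality; once that is pinned down, the identity $a_i=\pi(i)+\iright{k}{\pi(i)}{\sigma}-1-u_i$ and the area computation are routine. As noted in the excerpt, setting $k=0$ gives back Proposition~\ref{prop:classical-q}, which is a useful sanity check on the indexing conventions.
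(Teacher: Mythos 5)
Your proposal follows essentially the same route as the paper's proof: the same index set $\mathcal{I}$ of tuples $(u_i)$ with statistic $\sum u_i$, the same explicit bijection $a_i=\pi(i)+\iright{k}{\pi(i)}{\sigma}-1-u_i$ (your indexing by $\pi(i)$ in the $\texttt{right}$ factor is the correct one, and your check that the admissible preferences form a contiguous interval of values is exactly the point the paper leans on via the casework of Theorem~\ref{thm:permutations-k}). The one flaw is the final bookkeeping: your displayed intermediate expression simplifies to $\sum_i(2u_i-\pi(i)+1)$, not $\sum_i u_i$, and the proposed ``cleaner'' term-by-term identity $n-i+\iright{k}{i}{\sigma}-a_i=u_{\pi^{-1}(i)}$ is false (e.g.\ $\sigma=321$, $k=1$, $\alpha=(3,2,2)$ gives $1\neq 0$ for $i=1$); the cancellation happens only in aggregate, via $\sum_i\pi(i)=\sum_i i$ and $\sum_i\iright{k}{\pi(i)}{\sigma}=\sum_i\iright{k}{i}{\sigma}$ together with $\sum_i\bigl(n-i-\pi(i)+1\bigr)=0$. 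This is an arithmetic slip in a step you correctly identify as routine, not a conceptual gap, and once repaired the argument matches the paper's.
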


\begin{proof}
As in the proof of Proposition~\ref{prop:classical-q}, consider the $q$-analogue as a generating function for objects $(u_i)$, this time with $u_i<\len{k}{i}{\sigma}$. 
Again, let $\mathcal{I}$ be the set of these objects, and define a statistic $\texttt{stat}((u_i))=\sum u_i$. 
Now the $q$-analogue is a generating function satisfying

\[ \prod_{i=1}^n \left[ \len{k}{i}{\sigma}\right]_q = \sum_{(u_i)\in\mathcal{I}} q^{\texttt{stat}((u_i))}.\]

It remains to find a bijection $f:\mathcal{I}\to \varphi^{-1}_k(\sigma)$ with $\texttt{stat}=\texttt{area}_k\circ f$. We define the map $f$ as
\[
 f((u_i)) = (\pi_1 + \iright{k}{1}{\sigma} - u_1 - 1,\pi_2 + \iright{k}{2}{\sigma}  - u_2 - 1,\ldots,\pi_n + \iright{k}{n}{\sigma} - u_n - 1),
\]
for $(u_i)\in \mathcal{I}$, where $\pi$ is defined in the proof of Theorem~\ref{thm:permutations-k}.
We claim that $f$ is a bijection. To see that $f$ is well-defined, we start with the permutation $\pi=\pi_1\cdots\pi_n$, which is a parking function. Then, adding $\iright{k}{i}{\sigma} -1$ to $\pi_i$ leads to the highest point that the path could be, which is still a $k$-Naples parking function. Finally, weakly reducing each entry by $u_i$ gives a $k$-Naples parking function in the same fiber of $\varphi_k$. 
Moreover, given $\alpha\in \varphi^{-1}_k(\sigma)$, we have that $f^{-1}(\alpha)$ is defined by $u_i=a_i-\pi_i-\iright{k}{i}{\sigma}+1$. This formula is obtained directly from the definition of $f$ by taking $a_i$ as the $i^{\text{th}}$ entry of $f((u_i))$ and writing $u_i$ in terms of $a_i$, $\pi_i$ and $\iright{k}{i}{\sigma}$, using that the values of $\pi_i$ and $\iright{k}{i}{\sigma}$ are given by $\sigma$. 

Finally, the fact that $f$ is surjective follows from the casework in the proof of Theorem~\ref{thm:permutations-k}. 

Now the $k$-Naples area is 
\begin{align*}
\texttt{area}_k \circ f &= \sum \left[n - i + \iright{k}{i}{\sigma}- (\pi_i + \iright{k}{i}{\sigma}- u_i - 1)\right]\\
&= \sum \left[n-i-\pi_i+u_i+1 \right]= \sum u_i = \texttt{stat}((u_i)).\qedhere
\end{align*}
\end{proof}
Taking the $q$-analogues of the $\len{k}{i}{\sigma}$ in Theorem~\ref{thm:pfnk} gives the distribution of the $\texttt{area}_k$ statistic over the $k$-Naples parking functions.
\begin{proposition}
For $n\geq 1$, 
\[\sum_{\sigma\in S_n}\prod_{i=1}^n [\len{k}{i}{\sigma}]_q = \sum_{p\in PF_{n,k}}q^{\textnormal{\texttt{area}$_k$(p)}}.\] 
\end{proposition}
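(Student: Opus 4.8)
The plan is to sum the identity of Proposition~\ref{prop:kq} over all permutations $\sigma \in S_n$ and then recognize the right-hand side as the distribution of $\texttt{area}_k$ over the full set $PF_{n,k}$. Concretely, since $\varphi_k\colon PF_{n,k}\to S_n$ partitions $PF_{n,k}$ into the disjoint fibers $\varphi_k^{-1}(\sigma)$ as $\sigma$ ranges over $S_n$, every $p\in PF_{n,k}$ contributes $q^{\texttt{area}_k(p)}$ to exactly one inner sum. Thus
\[
\sum_{\sigma\in S_n}\prod_{i=1}^n[\len{k}{i}{\sigma}]_q
=\sum_{\sigma\in S_n}\sum_{p\in\varphi_k^{-1}(\sigma)}q^{\texttt{area}_k(p)}
=\sum_{p\in PF_{n,k}}q^{\texttt{area}_k(p)},
\]
where the first equality is Proposition~\ref{prop:kq} and the second is the fact that the fibers of $\varphi_k$ form a set partition of $PF_{n,k}$.

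The only subtlety I would take care to mention explicitly is that $\varphi_k$ is \emph{surjective} onto $S_n$ — otherwise the outer sum would range over a proper subset of $S_n$ and the formula would need the index set restricted. This is already noted in the excerpt: $\varphi_k$ restricts to the identity on $S_n\subseteq PF_{n,k}$, so in particular every permutation lies in the image, and the decomposition $PF_{n,k}=\bigsqcup_{\sigma\in S_n}\varphi_k^{-1}(\sigma)$ is genuinely indexed by all of $S_n$. (Whether a given fiber is nonempty is irrelevant; empty fibers contribute $0$ on both sides, and in fact the product $\prod_i\len{k}{i}{\sigma}$ equals $|\varphi_k^{-1}(\sigma)|\geq 1$ by Theorem~\ref{thm:permutations-k}, so no fiber is empty anyway.)

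There is essentially no obstacle here: this is a one-line corollary-style deduction, exactly analogous to how Corollary~\ref{cor:classical-q} is obtained from Proposition~\ref{prop:classical-q}. If I wanted to be maximally careful I would remark that setting $q=1$ recovers Theorem~\ref{thm:pfnk} — since $[\len{k}{i}{\sigma}]_1=\len{k}{i}{\sigma}$ and $q^{\texttt{area}_k(p)}\big|_{q=1}=1$ — which serves as a consistency check that the indexing and the $q$-analog are the right ones. The whole argument is: invoke Proposition~\ref{prop:kq}, swap the (finite) double sum using the fiber partition, and collapse.
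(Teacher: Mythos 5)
Your proof is correct and follows exactly the paper's argument: apply Proposition~\ref{prop:kq} and sum over the fibers of $\varphi_k$, which partition $PF_{n,k}$ as $\sigma$ ranges over $S_n$. The extra remarks on surjectivity and the $q=1$ consistency check are fine but not needed.
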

\begin{proof}
By Proposition~\ref{prop:kq}, summing over all fibers gives
\[\sum_{p\in PF_{n,k}}q^{\texttt{area}(p)}=\sum_{\sigma\in S_n} \sum_{p\in \varphi_k^{-1}(\sigma)}q^{\texttt{$\texttt{area}_\text{k}$}(p)}=\sum_{\sigma\in S_n}\prod_{i=1}^n [\len{k}{i}{\sigma}]_q.\qedhere\] 
\end{proof}

We conclude with Table~\ref{table:kNaplesarea}, where we list values of the $k$-Naples area for some values of $k$ and~$n$. 
\begin{table}[ht]
 \begin{tabular}{l | l | l} 
 $n$ & $k$ & Distribution of $\texttt{area}_k$ over $PF_{n,k}$ \\
 \hline
 $1$ & $0$ & $1$\\[0.1in]
 $2$ & $0$ & $q+2$ \\
 $2$ & $1$ & $2q+2$ \\[0.1in]
 $3$ & $0$ & $q^3+3q^2+6q+6$ \\
 $3$ & $1$ & $2q^3+7q^2+9q+6$ \\ 
 $3$ & $2$ & $3q^3+9q^2+9q+6$\\[0.1in]
 $4$ & $0$ & $q^6 + 4q^5 + 10q^4 + 20q^3 + 30q^2 + 36q + 24$\\
$4$ & $1$ & $2q^6 + 9q^5 + 24q^4 + 41q^3 + 53q^2 + 50q + 24$\\
$4$ & $2$ & $3q^6 + 13q^5 + 34q^4 + 58q^3 + 60q^2 + 48q + 24$\\
$4$ & $3$ & $4q^6 + 16q^5 + 40q^4 + 64q^3 + 60q^2 + 48q + 24$\\[0.1in]
$5$ & $0$ & $q^{10} + 5q^9 + 15q^8 + 35q^7 + 70q^6 + 120q^5 + 180q^4 + 240q^3 + 270q^2 + 240q + 120$\\
$5$ & $1$ & $2q^{10} + 11q^9 + 35q^8 + 84q^7 + 165q^6 + 263q^5 + 361q^4 + 429q^3 + 435q^2 + 320q + 120$\\
$5$ & $2$ & $3q^{10} + 16q^9 + 50q^8 + 121q^7 + 238q^6 + 384q^5 + 502q^4 + 529q^3 + 462q^2 + 306q + 120$\\
$5$ & $3$ & $4q^{10} + 21q^9 + 65q^8 + 155q^7 + 295q^6 + 464q^5 + 576q^4 + 550q^3 + 450q^2 + 300q + 120$\\
$5$ & $4$ & $5q^{10} + 25q^9 + 75q^8 + 175q^7 + 325q^6 + 500q^5 + 600q^4 + 550q^3 + 450q^2 + 300q + 120$\\
\hline
 \end{tabular}
 \caption{Values of the $k$-Naples area for first values of $k<n$ computed with~\cite{sage}.}
 \label{table:kNaplesarea}
\end{table}

\section*{Acknowledgements}

Part of this research was performed with support from the Institute for Pure and Applied Mathematics (IPAM), which is supported by the National Science Foundation (Grant No.\ DMS-1440415), from the EDGE Foundation, and from private donations of Joan Barksdale and Nancy Sinclair.
ARVM was partially supported by NSF Graduate Research Fellowship DGE-1247392 and the NSF KY-WV LSAMP Bridge to Doctorate.  
The authors want to thank Raman Sanyal for sharing his work with Elias Drohla in private communication, and for pointing out the reference to \textit{Exercise 5.49(d,e)} in~\cite{EC2}, and Ayo Adeniran for his helpful insights throughout the project. ES would also like to thank Ralph Morrison for connecting her to PEH and
Bernd Sturmfels for funding through his BEAR fund at the University of California, Berkeley.

\bibliographystyle{alpha}  
\bibliography{team5biblio}

\end{document}